\titleformat{\section}{\large\bfseries}{\thesection}{1em}{}
\newtheorem{theorem}{Theorem}
\newtheorem{lemma}{Lemma}
\newtheorem{corollary}{Corollary}
\theoremstyle{remark}
\newtheorem*{remark}{Remark}
\title{Asymptotic Behaviour of the Empirical Distance Covariance for Dependent Data}
\author{Marius Kroll \\ \small Department of Mathematics, Ruhr-Universität Bochum, 44780 Bochum, Germany\\
\small E-Mail: marius.kroll@rub.de}
\date{December 24, 2020}
\begin{document}
\maketitle
\begin{abstract}
We give two asymptotic results for the empirical distance covariance on separable metric spaces without any iid assumption on the samples. In particular, we show the almost sure convergence of the empirical distance covariance for any measure with finite first moments, provided that the samples form a strictly stationary and ergodic process. We further give a result concerning the asymptotic distribution of the empirical distance covariance under the assumption of absolute regularity of the samples and extend these results to certain types of pseudometric spaces. In the process, we derive a general theorem concerning the asymptotic distribution of degenerate V-statistics of order 2 under a strong mixing condition.
\end{abstract}

\paragraph*{Keywords and phrases}
Distance covariance, distance correlation, negative type, test of independence, mixing conditions.

\paragraph*{MSC2020 subject classification}
62H20 \and 62G20 \and 60F05 \and 30L05

\section{Introduction}
In \cite{lyons}, Lyons introduced the concept of distance covariance for separable metric spaces, generalising the work done by Székely, Rizzo and Bakirov in \cite{srb}. In this very general case, the distance covariance of a measure $\theta$ (on the product space $\mathcal{X} \times \mathcal{Y}$ of separable metric spaces $\mathcal{X}$ and $\mathcal{Y}$) with marginal distributions $\mu$ on $\mathcal{X}$ and $\nu$ on $\mathcal{Y}$ is defined as
\begin{equation*}
\label{eq:dc}
\mathrm{dcov}(\theta) := \int \delta_\theta(z,z') ~\mathrm{d}\theta^2(z,z')
\end{equation*}
for $z = (x,y), z' = (x', y')$, where
\begin{align*}
&\delta_\theta(z,z') := d_\mu(x,x')d_\nu(y,y'), \\
&d_\mu(x,x') := d_\mathcal{X}(x,x') - a_\mu(x) - a_\mu(x') + D(\mu), \\
&a_\mu(x) := \int d_\mathcal{X}(x,x') ~\mathrm{d}\mu(x'), \\
&D(\mu) := \int d_\mathcal{X}(x,x') ~\mathrm{d}\mu^2(x,x').
\end{align*}
To examine the properties of this object, Lyons made use of the concept of (strong) negative type. A metric space $\mathcal{X}$ is said to be of negative type, if there exists a mapping $\phi : \mathcal{X} \to H$ to a Hilbert space $H$, such that $d_\mathcal{X}(x,x') = \|\phi(x) - \phi(x')\|_H^2$ for all $x,x' \in \mathcal{X}$. It is of strong negative type if it is of negative type and $D(\mu_1 - \mu_2) = 0$ if and only if $\mu_1 = \mu_2$ for all probability measures $\mu_1, \mu_2$ with finite first moments. Lyons showed that the distance covariance is non-negative if $\mathcal{X}$ and $\mathcal{Y}$ are of negative type, and that the property $\mathrm{dcov}(\theta) = 0 \Leftrightarrow \theta = \mu \otimes \nu$ holds if $\mathcal{X}$ and $\mathcal{Y}$ are of strong negative type.

This means that the distance covariance completely characterises independence of random variables in metric spaces of strong negative type. Estimators for the distance covariance and their asymptotic behaviour are therefore of great interest for tests of independence.

A special case for real-valued random variables follows from choosing the embedding
\begin{align*}
\phi : \mathbb{R}^d &\to L^2(w_d) := \left\{f : \mathbb{R}^d \to \mathbb{C} ~\Big|~ \int |f|^2 w_d ~\mathrm{d}\lambda^d < \infty\right\} \\
x &\mapsto \frac{1}{\sqrt{2}} (1 - \exp(i\langle .,x\rangle))
\end{align*}
with $w_d(s) = \Gamma((d+1)/2)\pi^{-(d+1)/2}\|s\|_2^{-(d+1)}$, which Lyons in \cite{lyons} refers to as the \textit{Fourier embedding}. This results in the square of the distance covariance as introduced in \cite{srb}, i.e.
$$
\mathrm{dcov}(\theta) = \int |\varphi_{X,Y}(s,t) - \varphi_X(s)\varphi_Y(t)|^2 w_p(s)w_q(t) ~\mathrm{d}(s,t),
$$
where $\varphi_Z$ denotes the characteristic function of a random variable $Z$, and the vector $(X,Y) \in \mathbb{R}^{p+q}$ has distribution $\theta$.

Two of the main results of \cite{lyons} are Proposition 2.6 and Theorem 2.7, which describe the asymptotic behaviour of $\mathrm{dcov}(\theta_n)$, where $\theta_n$ is the empirical measure from $n$ iid-samples of $\theta$. Theorem 2.7, under sufficient moment assumptions, describes the asymptotic distribution of the sequence $n\mathrm{dcov}(\theta_n)$, if $\theta = \mu \otimes \nu$. Proposition 2.6 gives the almost sure convergence $\mathrm{dcov}(\theta_n) \xrightarrow[]{a.s.} \mathrm{dcov}(\theta)$ for any measure $\theta$ with finite first moments. However, as noted by Jakobsen in \cite{jakobsen}, Lyons' proof of Proposition 2.6 was incorrect and actually required $\theta$ to have finite $5/3$-moments. Lyons later acknowledged this in \cite{lyonserrata} (iii), showing that Proposition 2.6 as written in \cite{lyons} is still correct in the case of spaces of negative type, but leaving the question of whether finite first moments are sufficient in the general case of separable metric spaces unanswered. This problem was solved in \cite{janson}, where the almost sure convergence is shown in the case of iid samples.

In Section \ref{sec:metricspaces}, we show that one can obtain the almost sure convergence of the estimator $\mathrm{dcov}(\theta_n)$ under finite first moment assumption while dropping the iid assumption regarding the samples which constitute the empirical measure $\theta_n$. In Theorem \ref{thm:fs}, we show the almost sure convergence of $\mathrm{dcov}(\theta_n)$ under assumption of ergodicity and finite first moments. In Theorem \ref{thm:asymptotik}, we give an asymptotic result similar to Theorem 2.7 in \cite{lyons}, assuming absolute regularity. For this we make use of Theorem \ref{thm:rangzwei}, which is a general result concerning the asymptotic distribution of degenerate V-statistics under the assumption of $\alpha$-mixing data. The definitions of $\alpha$-mixing and absolute regularity are recalled at the end of this section.

A further generalisation can be achieved by raising the metrics of the underlying metric spaces to the $\beta$-th power. We will denote this with $\mathrm{dcov}_\beta$. Typically, $\beta$ is chosen between $0$ and $2$, where the choice $\beta = 1$ results in the regular distance covariance. An equivalent way of describing this is to use the regular definitions of distance covariance, but to consider pseudometric spaces of a particular kind instead of metric spaces, namely those which result from raising some metric to the $\beta$-th power (here, by a pseudometric we refer to a metric for which the triangle inequality need not hold). In Section \ref{sec:pseudometricspaces}, we generalise the results for metric spaces deduced in Section \ref{sec:metricspaces} to pseudometric spaces of this kind.

We now summarise some of the notation used in \cite{lyons}, as well as some basic properties of the distance covariance that will prove useful for our purposes.

Let $X$ and $Y$ be random variables with values in separable metric spaces $\mathcal{X}$ and $\mathcal{Y}$, respectively. We define $Z := (X,Y)$ and write $\theta := \mathcal{L}(Z)$, $\mu := \mathcal{L}(X)$ and $\nu := \mathcal{L}(Y)$, and denote by $\theta_n$ the empirical measure of $Z_1, ..., Z_n$, where $(Z_k)_{k \in \mathbb{N}}$ is a strictly stationary and ergodic sequence with $\mathcal{L}(Z_1) = \theta$. 

If we consider $\mathcal{X}$ to be of negative type via an embedding $\phi$, we denote the Bochner integral $\int \phi ~\mathrm{d}\mu$ with $\beta_\phi(\mu)$, and we write $\hat{\phi}$ for the centered embedding $\phi - \beta_\phi(\mu)$. If $\mathcal{Y}$ is of negative type via $\psi$, we define $\beta_\psi(\nu)$ and $\hat{\psi}$ analogously. If both $\mathcal{X}$ and $\mathcal{Y}$ are of negative type via embeddings $\phi : \mathcal{X} \to H_1$ and $\psi : \mathcal{Y} \to H_2$, we can consider the embedding
\begin{align*}
\phi \otimes \psi : \mathcal{X} \times \mathcal{Y} &\to H_1 \otimes H_2 \\
(x,y) &\mapsto \phi(x) \otimes \psi(y),
\end{align*}
where $H_1 \otimes H_2$ is the tensor product of the Hilbert spaces $H_1$ and $H_2$, equipped with the inner product $\langle u_1 \otimes v_1, u_2 \otimes v_2\rangle_{H_1 \otimes H_2} := \langle u_1, u_2\rangle_{H_1}\langle v_1, v_2\rangle_{H_2}$. 

By Proposition 3.5 in \cite{lyons}, we have that
\begin{equation}
\label{eq:deltatheta}
\delta_\theta(z,z') = 4\langle (\hat{\phi} \otimes \hat{\psi})(z),(\hat{\phi} \otimes \hat{\psi})(z')\rangle_{H_1 \otimes H_2}
\end{equation}
for all $z, z' \in \mathcal{X} \times \mathcal{Y}$, whenever $\mathcal{X}$ and $\mathcal{Y}$ are of negative type via embeddings $\phi$ and $\psi$, respectively.

For the remainder of this paper, we will drop the indices of the metrics on $\mathcal{X}$ and $\mathcal{Y}$ and of the inner products on $H_1$, $H_2$ or $H_1 \otimes H_2$, as it is clear from their arguments which metric or inner product we consider. More precisely, $d$ will denote both a metric on $\mathcal{X}$ and a (possibly different) metric on $\mathcal{Y}$, and $\langle ., .\rangle$ can denote one of three (possibly different) inner products on Hilbert spaces $H_1$, $H_2$ or $H_1 \otimes H_2$.

Recall that for two $\sigma$-algebras $\mathcal{A}$ and $\mathcal{B}$ we define the $\alpha$- and $\beta$-coefficients of $\mathcal{A}$ and $\mathcal{B}$ as
$$
\alpha(\mathcal{A}, \mathcal{B}) := \sup_{A \in \mathcal{A}, B \in \mathcal{B}} \left|\mathbb{P}(A \cap B) - \mathbb{P}(A)\mathbb{P}(B)\right|
$$
and
$$
\beta(\mathcal{A}, \mathcal{B}) := \sup \frac{1}{2} \sum_{i=1}^I\sum_{j=1}^J |\mathbb{P}(A_i \cap B_j) - \mathbb{P}(A_i)\mathbb{P}(B_j)|,
$$
respectively, where the second supremum is taken over all finite partitions $A_1, ..., A_I$ and $B_1, ..., B_J$ such that $A_i \in \mathcal{A}$ and $B_j \in \mathcal{B}$ for all $i$ and $j$.  For a process $(Z_k)_{k \in \mathbb{N}}$, we define
$$
\alpha(n) := \sup_{l \in \mathbb{N}} \alpha(\sigma(Z_1, ..., Z_l), \sigma(Z_{l+n}, Z_{l+n+1}, ...))
$$
and
$$
\beta(n) := \sup_{l \in \mathbb{N}} \beta(\sigma(Z_1, ..., Z_l), \sigma(Z_{l+n}, Z_{l+n+1}, ...)),
$$
and we say that the process $(Z_k)_{k \in \mathbb{N}}$ is $\alpha$-mixing or $\beta$-mixing if $\alpha(n) \xrightarrow[n \to \infty]{} 0$ or $\beta(n) \xrightarrow[n \to \infty]{} 0$, respectively. $\beta$-mixing is also known as absolute regularity. These definitions are taken from \cite{bradley}, where many properties of $\alpha$-mixing and absolutely regular processes are established.

\section{Results for metric spaces}
\label{sec:metricspaces}
We now present our results in the case of separable metric spaces. It should be kept in mind that while we consider the usual distance correlation, Theorems \ref{thm:fs} and \ref{thm:asymptotik} also hold for $\mathrm{dcov}_\beta$ (under appropriate moment conditions). However, we postpone discussion of this until Section \ref{sec:pseudometricspaces}, so as to avoid confusion by abstraction.

The following lemma is a variant of Theorem 3.5 in \cite{billingsley}, where it is formulated for random variables.

\begin{lemma}
\label{lem:gleichgradiglemma}
Let $\mathcal{X}$ be a metrizable topological space, $(\mu_n)_{n \in \mathbb{N}}$ a sequence of measures on $\mathcal{X}$ with weak limit $\mu$ and $h : \mathcal{X} \to \mathbb{R}$ a $\mu$-a.s. continuous function which fulfills the following uniform integrability condition:
\begin{equation}
\label{eq:gleichgradigintbar}
\lim_{M \to \infty} \limsup_{n \to \infty} \int_{\{|h| > M\}} |h| ~\mathrm{d}\mu_n = 0.
\end{equation}
Furthermore, we require $h$ to be dominated by some $\mu$-integrable function $g$, i.e. $|h| \leq g$ $\mu$-a.s. Then $\int h ~\mathrm{d}\mu_n \to \int h ~\mathrm{d}\mu$.
\end{lemma}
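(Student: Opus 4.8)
The plan is to reduce everything to the case of a \emph{bounded} $\mu$-a.s. continuous integrand, for which convergence of the integrals is essentially the portmanteau theorem, and then to let the truncation level go to infinity while controlling the two tail integrals separately. For $M > 0$ I would introduce the truncation $h_M := (h \wedge M) \vee (-M)$. Since the map $t \mapsto (t \wedge M) \vee (-M)$ is globally Lipschitz, hence continuous, $h_M$ is again $\mu$-a.s. continuous (the discontinuity set is not enlarged), and it is bounded by $M$. Applying weak convergence $\mu_n \to \mu$ to the bounded, $\mu$-a.s. continuous function $h_M$ (the bounded version of Theorem 3.5 in \cite{billingsley}) then gives $\int h_M ~\mathrm{d}\mu_n \to \int h_M ~\mathrm{d}\mu$ for every fixed $M$. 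Note also that the integrals $\int h ~\mathrm{d}\mu_n$ are finite for $n$ large by \eqref{eq:gleichgradigintbar}, and $\int h ~\mathrm{d}\mu$ is finite since $|h| \leq g$ is $\mu$-integrable.

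Next I would split the error via the triangle inequality into a $\mu_n$-tail, the bounded-integrand discrepancy, and a $\mu$-tail:
\begin{align*}
\left|\int h ~\mathrm{d}\mu_n - \int h ~\mathrm{d}\mu\right|
&\leq \int |h - h_M| ~\mathrm{d}\mu_n + \left|\int h_M ~\mathrm{d}\mu_n - \int h_M ~\mathrm{d}\mu\right| + \int |h_M - h| ~\mathrm{d}\mu.
\end{align*}
Because $|h - h_M| = (|h| - M)^+ \leq |h| \mathbf{1}_{\{|h| > M\}}$, the first summand is bounded by $\int_{\{|h|>M\}} |h| ~\mathrm{d}\mu_n$ and the third by $\int_{\{|h|>M\}} |h| ~\mathrm{d}\mu$.

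The crucial point is then the order of limits: for fixed $M$ I take $\limsup_{n \to \infty}$, which annihilates the middle term by the first step, yielding
$$
\limsup_{n \to \infty}\left|\int h ~\mathrm{d}\mu_n - \int h ~\mathrm{d}\mu\right| \leq \limsup_{n \to \infty}\int_{\{|h|>M\}} |h| ~\mathrm{d}\mu_n + \int_{\{|h|>M\}} |h| ~\mathrm{d}\mu,
$$
and only afterwards let $M \to \infty$. The first term on the right vanishes as $M \to \infty$ precisely by the uniform integrability hypothesis \eqref{eq:gleichgradigintbar}, while the second vanishes by dominated convergence, using $|h| \leq g$ with $g$ $\mu$-integrable. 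Hence the left-hand side equals $0$, which is the assertion.

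The one place genuinely requiring care is that the two tail integrals must be handled by \emph{different} mechanisms, and this is exactly why both hypotheses are present. The $\mu_n$-tails are only uniform in the $\limsup$-sense of \eqref{eq:gleichgradigintbar}, so taking $n \to \infty$ before $M \to \infty$ is essential and cannot be reversed. The $\mu$-tail, on the other hand, requires the $\mu$-integrability of $h$, which neither weak convergence nor \eqref{eq:gleichgradigintbar} supplies on its own and which is therefore imported through the dominating function $g$. Everything else is the routine truncation argument that extends convergence of integrals from bounded to uniformly integrable integrands.
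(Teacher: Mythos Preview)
Your proof is correct and follows essentially the same truncation strategy as the paper: split off a bounded part handled by weak convergence and control the tails via \eqref{eq:gleichgradigintbar} on the $\mu_n$-side and via the dominating $g$ on the $\mu$-side. The only difference is cosmetic --- you clip via $h_M = (h \wedge M)\vee(-M)$ and bound $\left|\int h\,\mathrm{d}\mu_n - \int h\,\mathrm{d}\mu\right|$ directly, whereas the paper uses the hard truncation $h\,\mathbf{1}_{\{|h|\leq M\}}$ and treats $\limsup$ and $\liminf$ separately; your choice has the minor advantage that $h_M$ is automatically $\mu$-a.s.\ continuous without needing to select $M$ with $\mu(\{|h|=M\})=0$.
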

\begin{proof}
Without loss of generality, suppose that $\mathcal{X}$ is a metric space. We can decompose the integral with respect to $\mu_n$ into a truncated part and a tail part:
\begin{align*}
\int h ~\mathrm{d}\mu_n &= \int_{\{|h| \leq M\}} h ~\mathrm{d}\mu_n + \int_{\{|h| > M\}} h ~\mathrm{d}\mu_n.
\end{align*}
The truncated integral converges, because it is the integral of an almost surely continuous and bounded function and $\mu_n \Rightarrow \mu$, while the uniform integrability condition \eqref{eq:gleichgradigintbar} implies that the tail integral vanishes in the limit $M, n \to \infty$. More precisely, we have the inequality
\begin{align}
\label{eq:gleichgradigungleichung}
\begin{split}
\limsup_{n \to \infty} \int h ~\mathrm{d}\mu_n &\leq \lim_{M \to \infty} \limsup_{n \to \infty} \int_{\{|h| \leq M\}} h ~\mathrm{d}\mu_n \\
&~~~+ \lim_{M \to \infty} \limsup_{n \to \infty} \int_{\{|h| > M\}} h ~\mathrm{d}\mu_n.
\end{split}
\end{align}
The second summand vanishes by assumption due to \eqref{eq:gleichgradigintbar}. For the first summand, note that for any fixed $M$, the limes superior in $n$ of the integral converges to $\int_{\{|h| \leq M\}} h ~\mathrm{d}\mu$, since $h\textbf{1}_{\{|h| \leq M\}}$ is bounded and almost surely continuous. Furthermore, since $|h\textbf{1}_{\{|h| \leq M\}}| \leq |h| \leq g$, we can employ the dominated convergence theorem to obtain
$$
\lim_{M \to \infty} \limsup_{n \to \infty} \int_{\{|h| \leq M\}} h ~\mathrm{d}\mu_n  = \int h ~\mathrm{d}\mu.
$$
Therefore, the summands in \eqref{eq:gleichgradigungleichung} are indeed well-definded. This gives us
$$
\limsup_{n \to \infty} \int h ~\mathrm{d}\mu_n \leq  \lim_{M \to \infty} \limsup_{n \to \infty} \int_{\{|h| \leq M\}} h ~\mathrm{d}\mu_n + 0 = \int h ~\mathrm{d}\mu.
$$
Since $0 \leq \liminf_{n \to \infty} \int_{\{|h| > M\}} |h|~\mathrm{d}\mu_n \leq \limsup_{n \to \infty} \int_{\{|h| > M\}} |h|~\mathrm{d}\mu_n$ for any $M$, we can use an almost identical argument to obtain
$$
\liminf_{n \to \infty} \int h ~\mathrm{d}\mu_n \geq  \lim_{M \to \infty} \liminf_{n \to \infty} \int_{\{|h| \leq M\}} h ~\mathrm{d}\mu_n + 0 = \int h ~\mathrm{d}\mu,
$$
and thus $\lim_{n\to \infty} \int h~\mathrm{d}\mu_n$ exists and is equal to $\int h~\mathrm{d}\mu$.
\end{proof}

In proving Theorem \ref{thm:fs}, we will make use of the following general result, which is a generalisation of Theorem U (ii) from \cite{aaronson}.

\begin{lemma}
\label{lem:fskonvergenz}
Let $(Z_k)_{k \in \mathbb{N}}$ be a strictly stationary and ergodic process with values in a separable metrizable topological space $\mathcal{Z}$ and marginal distribution $\mathcal{L}(Z_1) = \theta$. Let $h : \mathcal{Z}^d \to \mathbb{R}$ be a measurable function, and let $f : \mathcal{Z} \to \mathbb{R}$ be integrable with respect to $\theta$, so that $|h| \leq f \otimes ... \otimes f$, where the product denoted by $\otimes$ is taken $d$ times and $(f \otimes ... \otimes f)(z_1, ..., z_d) := \prod_{k = 1}^d f(z_k)$. If $h$ is $\theta^d$-a.e. continuous, then $V_h(Z_1, ..., Z_n) \to \int h ~\mathrm{d}\theta^d$ a.s., where $V_h(Z_1, ..., Z_n)$ denotes the $V$-statistics with kernel $h$.
\end{lemma}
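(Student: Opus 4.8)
The plan is to recognise the $V$-statistic as an integral against a product empirical measure and then to invoke Lemma~\ref{lem:gleichgradiglemma} on the product space $\mathcal{Z}^d$. Writing $\theta_n := \frac{1}{n}\sum_{k=1}^n \delta_{Z_k}$ for the empirical measure of the sample, one has the identity
$$
V_h(Z_1,\dots,Z_n) = \frac{1}{n^d}\sum_{i_1,\dots,i_d=1}^n h(Z_{i_1},\dots,Z_{i_d}) = \int h ~\mathrm{d}\theta_n^d,
$$
so the assertion is exactly that $\int h ~\mathrm{d}\theta_n^d \to \int h ~\mathrm{d}\theta^d$ almost surely. Since $\mathcal{Z}$ is separable metrizable, so is $\mathcal{Z}^d$, and $h$ is $\theta^d$-a.e.\ continuous and dominated by $g := f\otimes\dots\otimes f$, which is $\theta^d$-integrable because $\int g ~\mathrm{d}\theta^d = \left(\int f ~\mathrm{d}\theta\right)^d < \infty$ by Tonelli's theorem. (Replacing $f$ by $|f|$ if necessary, we may and do assume $f \geq 0$.) I would therefore apply Lemma~\ref{lem:gleichgradiglemma} on $\mathcal{Z}^d$ with $\mu_n = \theta_n^d$ and $\mu = \theta^d$, which reduces the problem to verifying almost surely the two remaining hypotheses: the weak convergence $\theta_n^d \Rightarrow \theta^d$ and the uniform integrability condition \eqref{eq:gleichgradigintbar} for $h$ against $\theta_n^d$.

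For the weak convergence I would first establish $\theta_n \Rightarrow \theta$ almost surely. Because $\mathcal{Z}$ is separable metric, weak convergence is determined by a single fixed countable family $\{\varphi_m\}_{m\in\mathbb{N}} \subseteq C_b(\mathcal{Z})$, i.e.\ $\theta_n \Rightarrow \theta$ holds as soon as $\int \varphi_m ~\mathrm{d}\theta_n \to \int \varphi_m ~\mathrm{d}\theta$ for every $m$. For each fixed $m$, Birkhoff's ergodic theorem applied to the integrable observable $\varphi_m$ yields $\int \varphi_m ~\mathrm{d}\theta_n = \frac{1}{n}\sum_{k=1}^n \varphi_m(Z_k) \to \mathbb{E}[\varphi_m(Z_1)] = \int \varphi_m ~\mathrm{d}\theta$ almost surely; discarding the countable union of the corresponding null sets gives $\theta_n \Rightarrow \theta$ almost surely. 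Since weak convergence of probability measures on separable metric spaces is preserved under taking products, this in turn yields $\theta_n^d \Rightarrow \theta^d$ almost surely.

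It remains to verify \eqref{eq:gleichgradigintbar}, i.e.\ $\lim_{M\to\infty}\limsup_{n\to\infty}\int_{\{|h|>M\}}|h|~\mathrm{d}\theta_n^d = 0$ almost surely. Since $|h| \leq g$, the set $\{|h|>M\}$ is contained in $\{g>M\}$, and $\prod_{k} f(z_k) > M$ forces at least one factor to exceed $M^{1/d}$; hence
$$
\int_{\{|h|>M\}}|h|~\mathrm{d}\theta_n^d \leq \int_{\{g>M\}}g~\mathrm{d}\theta_n^d \leq \sum_{k=1}^d \int_{\{f(z_k)>M^{1/d}\}} g ~\mathrm{d}\theta_n^d,
$$
and by the product structure of $\theta_n^d$ and $g$ together with their symmetry in the coordinates, the right-hand side equals
$$
d\left(\int_{\{f>M^{1/d}\}} f ~\mathrm{d}\theta_n\right)\left(\int f ~\mathrm{d}\theta_n\right)^{d-1}.
$$
Applying Birkhoff's theorem once to $f$ and once to the integrable function $f\,\mathbf{1}_{\{f>M^{1/d}\}}$ (for each $M\in\mathbb{N}$, again discarding a countable union of null sets) shows that almost surely this expression converges, as $n\to\infty$, to $d\left(\int_{\{f>M^{1/d}\}} f ~\mathrm{d}\theta\right)\left(\int f ~\mathrm{d}\theta\right)^{d-1}$. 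Letting $M\to\infty$, the factor $\int_{\{f>M^{1/d}\}} f ~\mathrm{d}\theta$ tends to $0$ by dominated convergence, establishing \eqref{eq:gleichgradigintbar}. With all hypotheses of Lemma~\ref{lem:gleichgradiglemma} verified on $\mathcal{Z}^d$, we conclude $\int h ~\mathrm{d}\theta_n^d \to \int h ~\mathrm{d}\theta^d$ almost surely. I expect the main obstacle to be the passage from Birkhoff's theorem — which only governs Cesàro averages of a single integrable observable — to the full product and $V$-statistic structure; this is handled by routing the weak convergence through a countable determining class and by exploiting the product form of $g$ and $\theta_n^d$ in the uniform integrability estimate.
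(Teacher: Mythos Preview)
Your proof is correct and follows essentially the same route as the paper: represent the $V$-statistic as $\int h\,\mathrm{d}\theta_n^d$, establish $\theta_n^d \Rightarrow \theta^d$ almost surely, verify the uniform integrability condition for Lemma~\ref{lem:gleichgradiglemma} by the same product-structure bound $d\left(\int f\,\mathrm{d}\theta_n\right)^{d-1}\int_{\{f>M^{1/d}\}} f\,\mathrm{d}\theta_n$ together with Birkhoff, and conclude. The only differences are cosmetic: you spell out the weak convergence $\theta_n \Rightarrow \theta$ via a countable determining class (the paper simply cites separability and Billingsley's Theorem~2.8(ii)) and you make the domination hypothesis and nonnegativity of $f$ explicit.
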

\begin{proof}
Without loss of generality, suppose that $\mathcal{Z}$ is a metric space. Let $\theta_n := n^{-1} \sum_{k=1}^n \delta_{Z_k}$ denote the empirical measure of $Z_1, ..., Z_n$. We have the representation $V_h(Z_1, ..., Z_n) = \int h ~\mathrm{d}\theta_n^d$. Furthermore, $\theta_n \Rightarrow \theta$ a.s., since $\mathcal{Z}$ is separable, and therefore $\theta_n^d \Rightarrow \theta^d$ a.s. by Theorem 2.8 (ii) in \cite{billingsley}.

We now wish to employ Lemma \ref{lem:gleichgradiglemma}. Hence, we need to show that the sequence of integrals fulfills the following uniform integrability condition:
$$
\lim_{M \to \infty} \limsup_{n \to \infty} \int_{\{|h| > M\}} |h| ~\mathrm{d}\theta_n^d = 0.
$$

We have
$$
\int_{\{|h| > M\}} |h| ~\mathrm{d}\theta_n^d \leq \int_{\{f \otimes ... \otimes f > M\}} f \otimes ... \otimes f  ~\mathrm{d}\theta_n^d,
$$
and since $\{f \otimes ... \otimes f > M\} \subseteq \bigcup_{i=1}^d M_i$ with $M_i := \{z \in \mathcal{Z}^d ~|~f(z_i) > M^{1/d}\}$, the right hand side is dominated by
$$
\sum_{i=1}^d \int_{M_i} f \otimes ... \otimes f ~\mathrm{d}\theta_n^d = d\left(\int f ~\mathrm{d}\theta_n\right)^{d-1}\int_{\{f > M^{1/d}\}} f ~\mathrm{d}\theta_n,
$$
which, due to Birkhoff's pointwise ergodic theorem, almost surely converges to $d\left(\mathbb{E}_\theta f\right)^{d-1}\mathbb{E}_\theta[\textbf{1}_{\{f > M^{1/d}\}} f]$, where $\textbf{1}_A$ denotes the indicator function of a set $A$. Thus, almost surely,
$$
\lim_{M \to \infty} \limsup_{n \to \infty} \int_{\{|h| > M\}} |h| ~\mathrm{d}\theta_n^d \leq \lim_{M \to \infty} d\left(\mathbb{E}_\theta f\right)^{d-1}\mathbb{E}_\theta[\textbf{1}_{\{f > M^{1/d}\}} f] = 0
$$
since $f$ is assumed to be integrable.

Lemma \ref{lem:gleichgradiglemma} therefore gives us
$$
V_h(Z_1, ..., Z_n) = \int h ~\mathrm{d}\theta_n^d \xrightarrow[n \to \infty]{a.s.} \int h~\mathrm{d}\theta^d.
$$
\end{proof}

Note that the following result does not require any assumptions beyond the separability of the metric spaces $\mathcal{X}$ and $\mathcal{Y}$ and the ergodicity of the samples generating the empirical measure $\theta_n$. Thus, Proposition 2.6 in \cite{lyons} and Theorem 4.4 in \cite{janson}, both of which require iid samples, are consequences of our result. 

\begin{theorem}
\label{thm:fs}
Let $X$ and $Y$ be random variables with values in separable metric spaces $\mathcal{X}$ and $\mathcal{Y}$, respectively, and $Z := (X,Y)$. Write $\theta := \mathcal{L}(Z)$, $\mu := \mathcal{L}(X)$ and $\nu := \mathcal{L}(Y)$, and denote by $\theta_n$ the empirical measure of $Z_1, ..., Z_n$, where $(Z_k)_{k \in \mathbb{N}}$ is a strictly stationary and ergodic sequence with $\mathcal{L}(Z_1) = \theta$. 

If $X$ and $Y$ have finite first moments, i.e. $\mathbb{E}d(X,x_0), \mathbb{E}d(Y,y_0) < \infty$ for some fixed (but arbitrary) $z_0 = (x_0, y_0) \in \mathcal{X} \times \mathcal{Y}$, then
$$
\mathrm{dcov}(\theta_n) \xrightarrow[n \to \infty]{a.s.} \mathrm{dcov}(\theta).
$$
\end{theorem}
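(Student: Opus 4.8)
The plan is to turn $\mathrm{dcov}(\theta_n)$ into a \emph{fixed}-kernel $V$-statistic to which Lemma~\ref{lem:fskonvergenz} applies; the difficulty is that the kernel $\delta_{\theta_n}$ depends on $n$ through the empirical centrings $a_{\mu_n}$, $D(\mu_n)$, and that a naive triangle-inequality bound on $d_{\mu_n}$ would force second moments. The device that keeps us at first moments is the \emph{geometric-mean} bound for the second difference $R^\mu(x,x'):=d(x,x')-d(x,x_0)-d(x_0,x')$: the triangle inequality gives $-2\min(d(x,x_0),d(x',x_0))\le R^\mu(x,x')\le 0$, hence $|R^\mu(x,x')|\le 2\sqrt{d(x,x_0)}\sqrt{d(x',x_0)}$, and likewise for $R^\nu$. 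This is the metric-space analogue of the Cauchy–Schwarz estimate $|d_\mu(x,x')|\le 2\|\hat{\phi}(x)\|\|\hat{\phi}(x')\|$ coming from \eqref{eq:deltatheta} in the negative-type case, but it requires no negative-type assumption.

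First I would record the double-centring identities $\int d_{\mu_n}(x,x')\,\mathrm d\mu_n(x')=0$ and $\int d_{\nu_n}(y,y')\,\mathrm d\nu_n(y')=0$ (valid for $\mu_n,\nu_n$ and, identically, for $\mu,\nu$), which imply that adding to either factor a summand depending on only one of its two arguments leaves $\int(\cdots)\,\mathrm d\theta_n^2$ unchanged. Using this twice, $d_{\nu_n}(y,y')$ may be replaced by $R^\nu(y,y')$, and writing $d_{\mu_n}(x,x')=R^\mu(x,x')+s_n(x)+s_n(x')$ with $s_n(x):=d(x,x_0)-a_{\mu_n}(x)+\tfrac12 D(\mu_n)$ yields the exact decomposition
$$\mathrm{dcov}(\theta_n)=\int R^\mu(x,x')R^\nu(y,y')\,\mathrm d\theta_n^2+2\int s_n(x)R^\nu(y,y')\,\mathrm d\theta_n^2 .$$

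The first integral has the fixed, $\theta^2$-a.e.\ continuous kernel $R^\mu R^\nu$, and by the two geometric-mean bounds it is dominated by $g\otimes g$ with $g(z)=2\sqrt{d(x,x_0)}\sqrt{d(y,y_0)}$; Cauchy–Schwarz gives $\int g\,\mathrm d\theta=2\,\mathbb{E}\sqrt{d(X,x_0)d(Y,y_0)}\le 2\sqrt{\mathbb{E}\, d(X,x_0)}\sqrt{\mathbb{E}\, d(Y,y_0)}<\infty$, so Lemma~\ref{lem:fskonvergenz} yields a.s.\ convergence to $\int R^\mu R^\nu\,\mathrm d\theta^2$. For the second integral the crucial point is that $s_n$ is \emph{uniformly bounded}: by the reverse triangle inequality $|d(x,x_0)-a_{\mu_n}(x)|=\big|\int(d(x,x_0)-d(x,u))\,\mathrm d\mu_n(u)\big|\le a_{\mu_n}(x_0)$, so $|s_n(x)|\le a_{\mu_n}(x_0)+\tfrac12 D(\mu_n)$, a quantity converging to a finite constant. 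Hence $s_n(x)R^\nu(y,y')$ is again dominated by a product $g'\otimes g'$ with $g'(z)=C\sqrt{d(y,y_0)}$ integrable, and I would replace $s_n$ by its fixed limiting kernel $s(x)R^\nu(y,y')$, where $s(x):=d(x,x_0)-a_\mu(x)+\tfrac12 D(\mu)$, via Lemma~\ref{lem:fskonvergenz}.

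The main obstacle is this last step: controlling $\int(s_n-s)(x)R^\nu(y,y')\,\mathrm d\theta_n^2\to 0$ a.s., i.e.\ disposing of the residual $n$-dependence in $s_n$. I would bound its modulus by $2\big(\int|s_n-s|\sqrt{d(y,y_0)}\,\mathrm d\theta_n\big)\big(\int\sqrt{d(y',y_0)}\,\mathrm d\nu_n\big)$, the second factor converging by Birkhoff's theorem; for the first I would use that the functions $a_{\mu_n}$ are $1$-Lipschitz ($|a_{\mu_n}(x)-a_{\mu_n}(x')|\le d(x,x')$) and converge pointwise a.s.\ on a countable dense set, so that Arzelà–Ascoli upgrades $s_n\to s$ to locally uniform convergence a.s.; together with the tightness of $\theta_n$ (from $\theta_n\Rightarrow\theta$) and the uniform integrability already exploited above, this forces the first factor to $0$. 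Finally, applying the very same decomposition to $\theta$ identifies the limit $\int R^\mu R^\nu\,\mathrm d\theta^2+2\int s\,R^\nu\,\mathrm d\theta^2$ as $\mathrm{dcov}(\theta)$, completing the proof.
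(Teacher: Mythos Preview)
Your argument is correct, but it takes a genuinely different route from the paper's proof. The paper never works with the $n$-dependent kernel $\delta_{\theta_n}$ at all: following Lyons, it writes $\mathrm{dcov}(\theta_n)$ directly as a $V$-statistic of order \emph{six} with the fixed symmetric kernel $\bar h$, the symmetrisation of $h(z_1,\dots,z_6)=f(x_1,\dots,x_4)f(y_1,y_2,y_5,y_6)$. The product bound there is obtained from $|h|\le 4\,d(x_2,x_3)d(y_1,y_6)$ together with the elementary inequality $a+b\le ab$ for $a,b\ge 2$, which yields $d(x,x')\le(2\vee d(x,x_0))(2\vee d(x',x_0))$ and hence a single tensor-product dominator $\varphi^{\otimes 6}$ with $\varphi(z)=2\vee d(x,x_0)\vee d(y,y_0)$. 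One application of Lemma~\ref{lem:fskonvergenz} then finishes the proof.

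Your approach stays at order two, replaces $d_{\mu_n}$ and $d_{\nu_n}$ by the fixed second differences $R^\mu,R^\nu$ via the double-centring identities, and uses the geometric-mean bound $|R^\mu(x,x')|\le 2\sqrt{d(x,x_0)d(x',x_0)}$ in place of the paper's $2\vee$-trick. The price you pay is that the empirical centring does not disappear entirely: the residual term $\int(s_n-s)(x)R^\nu\,\mathrm d\theta_n^2$ still carries an $n$-dependent integrand, and you have to remove it with an extra equicontinuity/Arzel\`a--Ascoli argument plus a tightness-and-truncation step. This is sound (the $a_{\mu_n}$ are $1$-Lipschitz, pointwise convergent a.s.\ by Birkhoff, and $|s_n|$ is uniformly bounded), but it is precisely the complication the paper's order-six representation avoids. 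Conversely, your decomposition keeps the analysis at the ``natural'' second-order level and makes explicit the link to the Cauchy--Schwarz bound \eqref{eq:deltatheta} in the negative-type case, which the paper's proof does not.
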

\begin{proof}
We follow the idea of the proof of Proposition 2.6 in \cite{lyons}. Consider the symmetric kernel $\bar{h}$, defined as the symmetrisation of $h$, where
$$
h(z_1, ..., z_6) := f(x_1, ..., x_4)f(y_1, y_2, y_5, y_6)
$$
and
$$
f(x_1, ..., x_4) := d(x_1, x_2) - d(x_1, x_3) - d(x_2, x_4) + d(x_3, x_4).
$$

As shown in the proof of Proposition 2.6 in \cite{lyons}, we have
\begin{equation}
\label{eq:habsch}
|h(z_1, ..., z_6)| \leq 4 d(x_2, x_3) d(y_1, y_6).
\end{equation}
Let $z_0 = (x_0, y_0)$ be an arbitrary but fixed point in $\mathcal{X} \times \mathcal{Y}$. Since $a+b \leq ab$ for all real $a, b \geq 2$, we have
$$
d(x,x') \leq d(x,x_0) + d(x',x_0) \leq (2 \lor d(x,x_0))(2 \lor d(x',x_0))
$$
for all $x, x' \in \mathcal{X}$. Now, for $z = (x,y) \in \mathcal{X} \times \mathcal{Y}$, let $\varphi_i(z)$ be defined as $2 \lor d(x,x_0)$ if $i = 2,3$ and as $2 \lor d(y,y_0)$ if $i = 1,6$, and write $\varphi$ for the maximum over all these $\varphi_i$. Using \eqref{eq:habsch}, this gives us
$$
|h(z_1, ..., z_6)| \leq 4 \varphi(z_1)\varphi(z_2)\varphi(z_3)\varphi(z_6).
$$
The functions $\varphi_i$ are continuous and measurable, since the underlying metric spaces are separable. They are also integrable because $X$ and $Y$ are assumed to have finite first moments. Using Lemma \ref{lem:fskonvergenz} therefore gives us $V_{\bar{h}}(Z_1, ..., Z_n) \to \int \bar{h} ~\mathrm{d}\theta^6$ almost surely, where $V_{\bar{h}}(Z_1, ..., Z_n)$ denotes the $V$-statistics with kernel $\bar{h}$. Since the $V$-statistics with kernel $\bar{h}$ are equal to $\mathrm{dcov}(\theta_n)$, and $\int \bar{h} ~\mathrm{d}\theta^6 = \mathrm{dcov}(\theta)$ (cf. \cite{lyons}), this is what we wanted to show.
\end{proof}

\begin{theorem}
\label{thm:rangzwei}
Let $\mathcal{Z}$ be a $\sigma$-compact metrizable topological space, $(Z_k)_{k \in \mathbb{N}}$ a strictly stationary sequence of $\mathcal{Z}$-valued random variables with marginal distribution $\mathcal{L}(Z_1) = \theta$. Consider a continuous, symmetric, degenerate and positive semidefinite kernel $h : \mathcal{Z}^2 \to \mathbb{R}$ with finite $(2+\varepsilon)$-moments with respect to $\theta^2$ and finite $(1+\frac{\varepsilon}{2})$-moments on the diagonal, i.e. $\mathbb{E}|h(Z_1,Z_1)|^{1+\varepsilon/2} < \infty$. Furthermore, let the sequence $(Z_k)_{k \in \mathbb{N}}$ satisfy an $\alpha$-mixing condition such that $\alpha(n) = O(n^{-r})$ for some $r > 1+2\varepsilon^{-1}$. Then, with $V = V_h(Z_1, ..., Z_n)$ denoting the $V$-statistics with kernel $h$,
$$
nV \xrightarrow[n \to \infty]{\mathcal{D}} \sum_{k=1}^\infty \lambda_k \zeta_k^2,
$$
where $(\lambda_k, \varphi_k)$ are pairs of the non-negative eigenvalues and matching eigenfunctions of the integral operator
$$
f \mapsto \int h(.,z)f(z)~\mathrm{d}\theta(z)
$$
and $(\zeta_k)_{k \in \mathbb{N}}$ is a sequence of centered Gaussian random variables whose covariance structure is given by
\begin{equation}
\label{eq:kovarianz}
\mathrm{Cov}(\zeta_i, \zeta_j) = \lim_{n \to \infty} \frac{1}{n} \sum_{t,u=1}^n \mathrm{Cov}(\varphi_i(Z_t), \varphi_j(Z_u)).
\end{equation}
\end{theorem}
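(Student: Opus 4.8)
The plan is to diagonalise the kernel via its Mercer expansion, rewrite $nV$ as a weighted sum of squared normalised partial sums of the eigenfunctions, and then combine a finite-dimensional central limit theorem for the mixing sequence with a uniform tail estimate. Symmetry and the $(2+\varepsilon)$-moment of $h$ with respect to $\theta^2$ make the integral operator $f \mapsto \int h(\cdot,z)f(z)\,\mathrm{d}\theta(z)$ Hilbert--Schmidt on $L^2(\theta)$, and positive semidefiniteness yields non-negative eigenvalues $\lambda_k$ with orthonormal eigenfunctions $\varphi_k$. Continuity of $h$ together with $\sigma$-compactness of $\mathcal{Z}$ lets me invoke a Mercer-type theorem, giving the expansion $h(z,z') = \sum_k \lambda_k \varphi_k(z)\varphi_k(z')$ and, on the diagonal, $\sum_k \lambda_k \varphi_k(z)^2 = h(z,z)$; since $\mathbb{E}|h(Z_1,Z_1)|^{1+\varepsilon/2} < \infty$ implies $h(Z_1,Z_1) \in L^1(\theta)$, the operator is trace class with $\sum_k \lambda_k = \mathbb{E}[h(Z_1,Z_1)] < \infty$. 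Degeneracy forces $\int \varphi_k \,\mathrm{d}\theta = 0$ for every $k$ with $\lambda_k > 0$, since $\varphi_k$ is then orthogonal to the constants, which lie in the kernel of the operator.

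Setting $S_{n,k} := n^{-1/2}\sum_{t=1}^n \varphi_k(Z_t)$ and substituting the expansion into the V-statistic gives $nV = \sum_k \lambda_k S_{n,k}^2$. For the finite-dimensional part I would fix $K$ and establish $(S_{n,1}, \dots, S_{n,K}) \xrightarrow{\mathcal{D}} (\zeta_1, \dots, \zeta_K)$, a centred Gaussian vector with covariance \eqref{eq:kovarianz}. Each $\varphi_k$ has finite $(2+\varepsilon)$-moment under $\theta$, since the eigenvalue relation and Hölder's inequality yield $\|\varphi_k\|_{2+\varepsilon} \leq \lambda_k^{-1}\|h\|_{L^{2+\varepsilon}(\theta^2)}$; so by the Cramér--Wold device and a classical central limit theorem for strictly stationary $\alpha$-mixing sequences (the moment and the rate $r > 1 + 2\varepsilon^{-1}$ supplying the required conditions, cf. \cite{bradley}), any fixed linear combination $\sum_{k \leq K} c_k \varphi_k(Z_t)$ obeys a univariate CLT with the long-run variance. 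The continuous mapping theorem then gives $\sum_{k \leq K} \lambda_k S_{n,k}^2 \xrightarrow{\mathcal{D}} \sum_{k \leq K} \lambda_k \zeta_k^2$.

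The decisive step is to show the remainder is uniformly negligible, i.e. $\lim_{K \to \infty} \sup_n \mathbb{E}\big[\sum_{k>K} \lambda_k S_{n,k}^2\big] = 0$. Writing $u_K(z) := (\sqrt{\lambda_k}\,\varphi_k(z))_{k>K}$ as an $\ell^2$-valued function, the diagonal identity gives $\|u_K(z)\|_{\ell^2}^2 = \sum_{k>K} \lambda_k \varphi_k(z)^2 \leq h(z,z)$, so that $\mathbb{E}\|u_K(Z_1)\|_{\ell^2}^{2+\varepsilon} \leq \mathbb{E}[h(Z_1,Z_1)^{1+\varepsilon/2}] < \infty$ uniformly in $K$ and, by dominated convergence, $\rho_K := (\mathbb{E}\|u_K(Z_1)\|_{\ell^2}^{2+\varepsilon})^{2/(2+\varepsilon)} \to 0$. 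Since $\mathbb{E}\big[\sum_{k>K} \lambda_k S_{n,k}^2\big] = \sum_{|m|<n}(1 - |m|/n)\,\mathbb{E}\langle u_K(Z_0), u_K(Z_{|m|})\rangle$ and $\mathbb{E}\,u_K(Z_0) = 0$, a Hilbert-space-valued version of Davydov's covariance inequality bounds the off-diagonal terms by $C\,\alpha(m)^{\varepsilon/(2+\varepsilon)}\rho_K$; the rate $r > 1 + 2\varepsilon^{-1}$ is precisely what makes $\sum_m \alpha(m)^{\varepsilon/(2+\varepsilon)} < \infty$, so the whole sum is bounded by $\sum_{k>K}\lambda_k + C'\rho_K \to 0$. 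Markov's inequality then makes $\sum_{k>K}\lambda_k S_{n,k}^2$ uniformly small in probability, and the same variance bound shows $\sum_{k>K}\lambda_k\zeta_k^2 \to 0$ as $K \to \infty$.

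Finally I would assemble the three ingredients through the standard approximation lemma for weak convergence (cf. Theorem 3.2 in \cite{billingsley}): the finite-dimensional convergence for each $K$, together with the two tail estimates, yields $nV \xrightarrow{\mathcal{D}} \sum_k \lambda_k \zeta_k^2$. I expect the main obstacle to be the tail control, where the precise calibration of the $(2+\varepsilon)$-moment on $\theta^2$, the diagonal $(1+\varepsilon/2)$-moment, and the mixing rate must conspire to make the estimate uniform in $n$; casting the remainder as an $\ell^2$-valued covariance is what keeps the diagonal moment condition, rather than unavailable uniform bounds on $\|\varphi_k\|_{2+\varepsilon}$, as the quantity that drives the bound. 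A secondary, more technical hurdle is justifying the Mercer expansion and the diagonal identity on a merely $\sigma$-compact space, which I would handle by exhausting $\mathcal{Z}$ with an increasing sequence of compacta and controlling the excess through the integrability furnished by the moment hypotheses.
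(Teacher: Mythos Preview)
Your proposal is correct and follows essentially the same route as the paper: Mercer expansion via the $\sigma$-compactness and continuity assumptions (the paper cites \cite{sun} for this), the bound $\|\varphi_k\|_{2+\varepsilon}\le\lambda_k^{-1}\|h\|_{2+\varepsilon}$, Cram\'er--Wold together with a CLT for $\alpha$-mixing sequences, and---most importantly---the same Hilbert-space recasting of the tail $\sum_{k>K}\lambda_k S_{n,k}^2$ controlled through a Davydov-type inequality for $H$-valued variables and the diagonal moment $\mathbb{E}|h(Z_1,Z_1)|^{1+\varepsilon/2}$, before assembling everything with Theorem~3.2 in \cite{billingsley}. The only cosmetic differences are that the paper works in the weighted sequence space $\{(a_k):\sum_k\lambda_k a_k^2<\infty\}$ rather than your isometric $\ell^2$-embedding via $\sqrt{\lambda_k}$, and it concludes $T_K(Z_1)\to 0$ in $L^{2+\varepsilon}$ by Vitali's theorem where you invoke dominated convergence.
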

\begin{proof}
We note that the conditions of Theorem 2 in \cite{sun} are satisfied by Propositions 1-3 and Assumption 1 ibid., the latter of which is a consequence of $\mathbb{E}|h(Z_1,Z_1)|^{1+\varepsilon/2} < \infty$. Hence we get
$$
h(z,z') = \sum_{k=1}^\infty \lambda_k \varphi_k(z)\varphi_k(z')
$$ 
for all $z,z' \in \mathrm{supp}(\theta)$. The $\varphi_k$ are centered and form an orthonormal basis of $L^2(\theta)$. Adopting the notation $V^{(K)}$ for the $V$-statistics for the truncated kernel $\sum_{k=1}^K \lambda_k \varphi_k(z)\varphi_k(z')$, we note that $nV^{(K)} = \sum_{k=1}^K \lambda_k \zeta_{n,k}^2$, where $\zeta_{n,k} := n^{-1/2} \sum_{t=1}^n \varphi_k(Z_t)$. Using the Cramér-Wold theorem, we will now show that, for any $K \in \mathbb{N}$, $(\zeta_{n,k})_{1 \leq k \leq K}$ weakly converges to $(\zeta_k)_{1 \leq k \leq K}$, where the $\zeta_k$ are centered Gaussian variables with their covariances given in \eqref{eq:kovarianz}. 

Let $c_1, ..., c_K$ be real constants and set $\xi_t := \sum_{k=1}^K c_k\varphi_k(Z_t)$. Then the $\xi_t$ are centered random variables with $\mathbb{E}\xi_t^2 = \sum_{k=1}^K c_k^2$. 

Note that, by definition, $\varphi_k(z) = \lambda_k^{-1}\mathbb{E}[h(z,Z_1)\varphi_k(Z_1)]$ and thus
\begin{equation}
\label{eq:phibetrag}
|\varphi_k(z)| \leq |\lambda_k|^{-1}\|h(z,.)\|_2.
\end{equation}
Here, we have used the Cauchy-Schwarz inequality and the fact that the eigenfunctions $\varphi_k$ form an orthonormal basis of $L^2(\theta)$. This gives us
\begin{align*}
\int |\varphi_k(z)|^{2+\varepsilon} ~\mathrm{d}\theta(z) &\leq \lambda_k^{-(2+\varepsilon)} \int \|h(z,.)\|^{2+\varepsilon} ~\mathrm{d}\theta(z) \\
&= \lambda_k^{-(2+\varepsilon)}  \int \left(\int |h(z,z')|^2~\mathrm{d}\theta(z')\right)^\frac{2+\varepsilon}{2}~\mathrm{d}\theta(z) \\
&\leq \lambda_k^{-(2+\varepsilon)} \int |h(z,z')|^{2+\varepsilon} ~\mathrm{d}\theta^2(z,z')
\end{align*}
by Jensen's inequality, which implies $\|\varphi_k\|_{2+\varepsilon} \leq \lambda_k^{-1}\|h\|_{2+\varepsilon}$. Since our kernel $h$ has finite $(2+\varepsilon)$-moments by assumption, this property translates to the eigenfunctions $\varphi_k$. Using Theorem 3.7 and Remark 1.8 in \cite{bradley} therefore gives us
$$
|\mathrm{Cov}(\varphi_k(Z_t), \varphi_l(Z_u))| \leq C \alpha(\sigma(Z_t), \sigma(Z_u))^{\varepsilon/(2+\varepsilon)} \leq C \alpha(|t-u|)^{\varepsilon/(2+\varepsilon)}
$$
for all $1 \leq k,l \leq K$, where $C$ is a positive constant depending on the corresponding eigenfunctions and -values. From this and the fact that $\alpha(n) = O(n^{-r})$ with $r > 1+2\varepsilon^{-1}$ it follows that, for any $k,l$, the infinite series $\sum_{d=1}^\infty\mathrm{Cov}(\varphi_k(Z_1), \varphi_l(Z_{1+d}))$ and $\lim_n n^{-1}\sum_{d=1}^{n-1} d\mathrm{Cov}(\varphi_k(Z_1), \varphi_l(Z_{1+d}))$ converge, since $d/n < 1$ for all $1 \leq d < n$. Thus, with $S_n$ denoting the sum over $\xi_1, ..., \xi_n$, we have that
\begin{align*}
n^{-1}\sigma_n^2 := n^{-1}\mathbb{E}S_n^2 &= n^{-1} \sum_{t,u=1}^n \sum_{k,l=1}^K c_k c_l \mathrm{Cov}(\varphi_k(Z_t), \varphi_l(Z_u)) \\
&= \sum_{k=1}^K c_k^2 + n^{-1}\sum_{t \neq u}^n \sum_{k,l=1}^K c_k c_l \mathrm{Cov}(\varphi_k(Z_t), \varphi_l(Z_u)) \\
&= \sum_{k=1}^K c_k^2 + n^{-1} 2\sum_{d=1}^{n-1} (n-d) \sum_{k,l=1}^K c_k c_l \mathrm{Cov}(\varphi_k(Z_1), \varphi_l(Z_{1+d})) \\
&\xrightarrow[n \to \infty]{} \sigma^2 < \infty,
\end{align*}
where we have made use of the stationarity of the process $(Z_k)_{k \in \mathbb{N}}$ and the fact that the eigenfunctions $\varphi_k$ form an orthonormal basis of $L^2$. If $\zeta_1, ..., \zeta_K$ are Gaussian random variables with their covariance function given by \eqref{eq:kovarianz}, the limit $\sigma^2$ is the variance of the linear combination $\sum_{k=1}^K c_k\zeta_k$.

We now show the uniform integrability of the sequence $(S_n^2\sigma_n^{-2})_{n\in\mathbb{N}}$. It suffices to show that $\mathbb{E}|S_n\sigma_n^{-1}|^{2+\delta}$ is uniformly bounded in $n$ for some $\delta > 0$. Since $h$ has finite $(2+\varepsilon)$-moments, we get
$$
\sup_{n \in \mathbb{N}}\mathbb{E}\left|\sum_{k=1}^K c_k\varphi_k(Z_n)\right|^{2+\varepsilon} \leq \sup_{n \in \mathbb{N}}\left\{K^{1+\varepsilon} \sum_{k=1}^K \mathbb{E}\left[|c_k\varphi(Z_n)|^{2+\varepsilon}\right]\right\} < M(\varepsilon) < \infty.
$$
Here, we have made use of \eqref{eq:phibetrag} and the stationarity of the sequence $(Z_n)$, which ensures that the upper bound $M(\varepsilon)$ is indeed uniform in $n$. Since $\alpha(n) = O(n^{-r})$ with $r > 1 + 2\varepsilon^{-1} $ and $\sigma_n$ has rate of growth $\theta(\sqrt{n})$, Theorem 2.1 in \cite{sotres} gives us $\mathbb{E}|S_n\sigma_n^{-1}|^{2+\delta} = O(1)$ for some $\delta > 0$. This implies uniform integrability of $(S_n^2\sigma_n^{-2})_{n \in \mathbb{N}}$.

Using Theorem 10.2 from \cite{bradley} therefore gives us
$$
\sum_{k=1}^K c_k \zeta_{n,k} = \frac{S_n}{\sqrt{n}} = \frac{S_n}{\sigma_n} \cdot \frac{\sigma_n}{\sqrt{n}} \xrightarrow[n \to \infty]{\mathcal{D}} \mathcal{N}(0, \sigma^2) = \mathcal{L}\left(\sum_{k=1}^K c_k \zeta_k\right),
$$
and so, by the Cramér-Wold theorem, the vectors $(\zeta_{n,k})_{1 \leq k\leq K}$ converge to Gaussian vectors $(\zeta_k)_{1 \leq k \leq K}$ with the covariance stucture described in \eqref{eq:kovarianz} for any $K \in \mathbb{N}$.

Now, applying the continuous mapping theorem gives us
\begin{equation}
\label{eq:billingsley1}
nV^{(K)} = \sum_{k=1}^K \lambda_k \zeta_{n,k}^{2} \xrightarrow[n \to \infty]{\mathcal{D}} \sum_{k=1}^K \lambda_k \zeta_k^2 =: \zeta^{(K)}
\end{equation}
and the summability of the eigenvalues $\lambda_k$, which is due to the identity $\sum_{k=1}^\infty \lambda_k = \mathbb{E}h(Z_1, Z_1) < \infty$, implies that
\begin{equation}
\label{eq:billingsley2}
\mathbb{E}\left|\zeta- \zeta^{(K)}\right| = \sum_{k>K}\lambda_k \xrightarrow[K \to \infty]{} 0.
\end{equation}

We will now show that
\begin{equation}
\label{eq:billingsley3}
\lim_{K \to \infty}\limsup_{n \to \infty} \mathbb{E}|nV - nV^{(K)}| = 0.
\end{equation}
We consider the Hilbert space $H$ of all real-valued sequences $(a_k)_{k \in \mathbb{N}}$ for which the series $\sum_k \lambda_k a_k^2$ converges, equipped with the inner product given by $\langle (a_k), (b_k)\rangle_H := \sum_k \lambda_k a_k b_k$. Then, writing $T_K(Z_t)$ for the $H$-valued random variable $(0^K, (\varphi_k(Z_t))_{k > K})$, where $0^K$ denotes the $K$-dimensional zero vector, we get
\begin{align*}
\mathbb{E}|nV - nV^{(K)}| &= \mathbb{E}\left[\sum_{k>K} \lambda_k \left(\frac{1}{\sqrt{n}} \sum_{t=1}^n \varphi_k(Z_t)\right)^2\right] \\
&= \mathbb{E}\left\|\frac{1}{\sqrt{n}} \sum_{t=1}^n T_K(Z_t)\right\|_H^2 = \mathrm{Var}\left(\frac{1}{\sqrt{n}} \sum_{t=1}^n T_K(Z_t)\right) \\
&= \frac{1}{n} \sum_{s,t=1}^n \mathrm{Cov}(T_K(Z_s), T_K(Z_t)).
\end{align*}
Here, we define the covariance of two $H$-valued random variables $X$ and $Y$ as the real number $\mathrm{Cov}(X,Y) := \mathbb{E}\langle X,Y\rangle_H - \langle \mathbb{E}X, \mathbb{E}Y\rangle_H$. We aim to employ a covariance inequality for Hilbert-space valued random variables.

For this, let us first consider the $(2+\varepsilon)$-moments of $T_K(Z_1)$. For any $p >0$, we get
\begin{align*}
\|T_K(Z_1)\|_p^p &= \int \|T_K(z)\|_H^p ~\mathrm{d}\theta(z) = \int \left(\sum_{k>K} \lambda_k \varphi_k(z)^2\right)^{p/2}~\mathrm{d}\theta(z) \\
&\leq \int \left(\sum_{k=1}^\infty \lambda_k \varphi_k(z)^2\right)^{p/2}~\mathrm{d}\theta(z) = \int h(z,z)^{p/2} ~\mathrm{d}\theta(z) \\
&= \|h(Z_1, Z_1)\|_{p/2}^{p/2}.
\end{align*}
Since $h$ has finite $(1+\frac{\varepsilon}{2})$-moments on the diagonal by assumption, this implies the $(2+\varepsilon)$-integrability of $T_K(Z_1)$.

Lemma 2.2 in \cite{dehlingvector} and the stationarity of the process $(Z_t)_{t \in \mathbb{N}}$ therefore gives us
$$
|\mathrm{Cov}(T_K(Z_s), T_K(Z_t)| \leq 15 \|T_K(Z_1)\|_{2+\varepsilon}^2 \alpha(|s-t|)^{\varepsilon/(2+\varepsilon)}
$$
and we have shown before that $n^{-1}\sum_{s,t=1}^n \alpha(|s-t|)^{\varepsilon/(2+\varepsilon)}$ converges to a finite limit $c$. Furthermore, from $\|T_K(Z_1)\|_2^2 = \sum_{k>K} \lambda_k \xrightarrow[K \to \infty]{} 0$ and $\|T_K(Z_1)\|_{2+\varepsilon} \leq \|T_1(Z_1)\|_{2+\varepsilon}$ (i.e. the sequence $(T_K(Z_1))_{K \in \mathbb{N}}$ is uniformly $(2+\varepsilon)$-integrable) it follows by Vitali's Theorem that $T_K(Z_1) \xrightarrow[K \to \infty]{(2+\varepsilon)} 0$. Putting all of the above together, we get
\begin{align*}
\lim_{K \to \infty}\limsup_{n \to \infty}\mathbb{E}|nV - nV^{(K)}| &\leq  15c\lim_{K \to \infty}\|T_K(Z_1)\|_{2+\varepsilon}^2 = 0.
\end{align*}
By Theorem 3.2 in \cite{billingsley}, \eqref{eq:billingsley1}, \eqref{eq:billingsley2} and \eqref{eq:billingsley3}, the latter of which we have just shown, imply $nV \xrightarrow[n \to \infty]{\mathcal{D}} \zeta$.
\end{proof}

\begin{lemma}
\label{lem:indizesbeschr}
If $(X_k)_{k \in \mathbb{N}}$ is a strictly stationary sequence of random variables whose marginal distribution $\mu$ has finite $q$-moments, then there exists an upper bound $M \in \mathbb{R}$ such that, for any collection of indices $i_1, ..., i_4$,
$$
\mathbb{E}\left[f(X_{i_1}, ..., X_{i_4})^{2p}\right] \leq M(p) < \infty
$$
for any $p < q$, where $f$ is the function from the proof of Theorem \ref{thm:fs}.
\end{lemma}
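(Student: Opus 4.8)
The plan is to reduce everything to a single marginal moment of $\mu$. First I would derive a pointwise bound showing that $|f|$ is controlled by one pairwise distance; then, after raising to the power $2p$, taking expectations, and using the triangle inequality together with strict stationarity, I obtain a bound that is finite and, crucially, independent of the indices $i_1,\dots,i_4$.

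The first step is the pointwise estimate. Writing $f(x_1,\dots,x_4) = [d(x_1,x_2)-d(x_1,x_3)] + [d(x_3,x_4)-d(x_2,x_4)]$ and applying the reverse triangle inequality $|d(a,c)-d(b,c)| \le d(a,b)$ to each bracket (with $c=x_1$ in the first and $c=x_4$ in the second) gives $|f(x_1,\dots,x_4)| \le 2\,d(x_2,x_3)$ for all $x_1,\dots,x_4 \in \mathcal{X}$. Consequently $\mathbb{E}[f(X_{i_1},\dots,X_{i_4})^{2p}] \le 2^{2p}\,\mathbb{E}[d(X_{i_2},X_{i_3})^{2p}]$. (The symmetric grouping $[d(x_1,x_2)-d(x_2,x_4)]+[d(x_3,x_4)-d(x_1,x_3)]$ bounds $|f|$ by $2\,d(x_1,x_4)$ instead, so any convenient pair of indices may be used.)

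Next I would pass from the pairwise distance to the distance from the fixed reference point $x_0$. Using $d(X_{i_2},X_{i_3}) \le d(X_{i_2},x_0)+d(X_{i_3},x_0)$ and the elementary inequality $(a+b)^{2p}\le 2^{2p-1}(a^{2p}+b^{2p})$ (valid for $2p\ge 1$; for $2p<1$ the subadditivity of $t\mapsto t^{2p}$ serves the same purpose), I obtain $\mathbb{E}[d(X_{i_2},X_{i_3})^{2p}] \le 2^{2p-1}\big(\mathbb{E}[d(X_{i_2},x_0)^{2p}]+\mathbb{E}[d(X_{i_3},x_0)^{2p}]\big)$. By strict stationarity, each of these two marginal moments equals $m_{2p} := \mathbb{E}[d(X_1,x_0)^{2p}]$, independently of the index. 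Combining the estimates yields $\mathbb{E}[f(X_{i_1},\dots,X_{i_4})^{2p}] \le 2^{4p-1} m_{2p} =: M(p)$, a bound depending only on $p$. Finiteness of $m_{2p}$ is exactly what the finite-moment hypothesis on $\mu$ supplies for the relevant range of $p$, and this is the sole point at which the moment assumption enters.

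The argument is essentially routine, so I do not expect a genuine obstacle; the one substantive point is that the bound must be \emph{uniform} over every collection of indices $i_1,\dots,i_4$. This is precisely what strict stationarity delivers: after the triangle-inequality reduction, every marginal distance moment collapses to the single quantity $m_{2p}$, so the index configuration drops out entirely. No mixing or independence assumption is required anywhere in the proof.
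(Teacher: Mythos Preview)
Your argument is clean as far as it goes, but it does not prove the lemma as stated: it only yields the bound for $p \le q/2$, not for all $p < q$. After your reverse–triangle step you raise to the power $2p$ and end up needing $m_{2p} = \mathbb{E}[d(X_1,x_0)^{2p}] < \infty$. The hypothesis, however, is only that $\mu$ has finite $q$-moments, so $m_{2p}$ is guaranteed to be finite only when $2p \le q$. The sentence ``finiteness of $m_{2p}$ is exactly what the finite-moment hypothesis on $\mu$ supplies for the relevant range of $p$'' glosses over precisely this point. And the gap is not cosmetic: in the application (Theorem~\ref{thm:asymptotik}) one has $q = 1+\varepsilon$ and needs the case $2p = 2+\varepsilon$, i.e.\ $p = 1+\varepsilon/2$, which sits strictly between $q/2$ and $q$.

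The missing idea is the one you yourself note in passing and then discard: the symmetric grouping also gives $|f| \le 2\,d(x_1,x_4)$, so \emph{multiplying} the two bounds yields $f^{2} \le 4\,d(x_2,x_3)\,d(x_1,x_4)$ and hence $f^{2p} \le 4^{p}\,d(x_2,x_3)^{p}\,d(x_1,x_4)^{p}$. Each factor now only needs $p$-th moments, which are available whenever $p<q$. The remaining task is to bound $\mathbb{E}[d(X_{i_2},X_{i_3})^{p}\,d(X_{i_1},X_{i_4})^{p}]$ uniformly in the indices even though the two factors are not independent. The paper does this by adding and subtracting the product of expectations and controlling the cross term with Yoshihara's $\beta$-mixing covariance inequality, using only the trivial bound $\beta(n)\le 1$; stationarity then makes the resulting constant index-free. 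So a mixing-type decoupling \emph{is} needed for the full range $p<q$, contrary to your final remark.
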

\begin{proof}
First, consider any two indices $i_1, i_2$. Then, due to \eqref{eq:weaktriangle}, we have
\begin{align}
\label{eq:momenteabhaengig}
\begin{split}
\mathbb{E}[d(X_{i_1}, X_{i_2})^q] &\leq 2^{q-1} \mathbb{E}[d(X_{i_1}, x_0)^q + d(x_0, X_{i_2})^q \\
&= 2^q \int d(x,x_0)^q ~\mathrm{d}\mu(x) =: M_0 < \infty,
\end{split}
\end{align}
where $x_0$ is some arbitrary point in $\mathcal{X}$.

Now, let $i_1, ..., i_4$ be fixed but arbitrary indices. Then, with a similar bound to the one used in Lemma \ref{lem:pseudosquint},
\begin{align}
\label{eq:momenteabhaengig2}
\begin{split}
&\mathbb{E}[f(X_{i_1}, ..., X_{i_4})^{2p}] \leq 4^p \mathbb{E}[d(X_{i_2}, X_{i_3})^{p} d(X_{i_1}, X_{i_4})^{p}] \\
&\leq 4^{p} \left|\mathbb{E}[d(X_{i_2}, X_{i_3})^{p} d(X_{i_1}, X_{i_4})^{p}] - \mathbb{E}[d(X_{i_2}, X_{i_3})^{p}]\mathbb{E}[d(X_{i_1}, X_{i_4})^{p}]\right| \\
&~~~+ 4^{p} \mathbb{E}[d(X_{i_2}, X_{i_3})^p]\mathbb{E}[d(X_{i_1}, X_{i_4})^p].
\end{split}
\end{align}
We use Lemma 1 from \cite{yoshihara} for the function $h(x_1, ..., x_4) := d(x_1, x_2)^p d(x_3, x_4)^p$ and the reordered collection $(i_2, i_3, i_1, i_4)$. Their assumptions are satisfied with $\delta := \frac{q}{p} - 1$, because
$$
\int h^{1+\delta} ~\mathrm{d}\left(\mathcal{L}(X_{i_2}, X_{i_3}) \otimes \mathcal{L}(X_{i_1}, X_{i_4})\right) = \mathbb{E}[d(X_{i_2}, X_{i_3})^q]\mathbb{E}[d(X_{i_1}, X_{i_4})^q] \leq M_0^2
$$
due to \eqref{eq:momenteabhaengig}. Thus, Lemma 1 in \cite{yoshihara} gives us
\begin{align}
\label{eq:momenteabhaengig3}
\begin{split}
&|\mathbb{E}[d(X_{i_2}, X_{i_3})^{p} d(X_{i_1}, X_{i_4})^{p}] - \mathbb{E}[d(X_{i_2}, X_{i_3})^{p}]\mathbb{E}[d(X_{i_1}, X_{i_4})^{p}]| \\
&\leq 4 M_0^{\frac{2}{1+\delta}} \beta(|i_1 - i_3|)^\frac{\delta}{1 + \delta},
\end{split}
\end{align}
where $\beta(n)$ is the $\beta$-mixing coefficient of the sequence $(Z_k)_{k \in \mathbb{N}}$. Because $\beta(n) \leq 1$ for all $n \in \mathbb{N}$, \eqref{eq:momenteabhaengig}, \eqref{eq:momenteabhaengig2} and \eqref{eq:momenteabhaengig3} give us
$$
\mathbb{E}[f(X_{i_1}, ..., X_{i_4})^{2p}] \leq 4^{p+1}M_0^\frac{2}{1+\delta} + 4^p M_0^2 =: M(p) < \infty.
$$
\end{proof}
The following lemma is an adaptation of Lemma 2 in \cite{yoshihara} in the sense that our result is implicitly contained in their proof. Another variant of this lemma (for U-statistics) can be found in \cite{arcones}. Since both of these lemmas are slightly different from our version, we include a proof for the sake of completeness. However, it should be noted that all three proofs apply the same technique.
\begin{lemma}
\label{lem:varianzdeg}
Let $h$ be a symmetric and degenerate kernel of order $c \geq 2$. Here, we understand degeneracy as $\mathbb{E}h(z_1, ..., z_{c-1}, Z_c) = 0$ almost surely. If, for some $p > 2$, the $p$-th moments of $h(Z_{i_1}, ..., Z_{i_c})$ are uniformly bounded and $(Z_n)_{n \in \mathbb{N}}$ is strictly stationary and absolutely regular with mixing coefficients $\beta(n) = O(n^{-r})$, where $r > cp/(p-2)$, then $\mathbb{E}[V^2] = O(n^{-c})$, where $V = V_h(Z_1, ..., Z_n)$ is the V-statistic with kernel $h$.
\end{lemma}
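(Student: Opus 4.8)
The plan is to expand the second moment of the $V$-statistic directly and to show that the sum of the $n^{2c}$ resulting expectations is of order $n^c$. Writing $V = n^{-c}\sum_{\mathbf{i}} h(Z_{\mathbf{i}})$ with $\mathbf{i} = (i_1,\dots,i_c)$ ranging over $\{1,\dots,n\}^c$ and $Z_{\mathbf{i}} := (Z_{i_1},\dots,Z_{i_c})$, we have
$$
\mathbb{E}[V^2] = \frac{1}{n^{2c}} \sum_{\mathbf{i}, \mathbf{j}} \mathbb{E}[h(Z_{\mathbf{i}})h(Z_{\mathbf{j}})],
$$
so it suffices to prove $\sum_{\mathbf{i},\mathbf{j}} |\mathbb{E}[h(Z_{\mathbf{i}})h(Z_{\mathbf{j}})]| = O(n^c)$. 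By the Cauchy--Schwarz inequality and the uniform boundedness of the $p$-th moments, the products $h(Z_{\mathbf{i}})h(Z_{\mathbf{j}})$ have uniformly bounded $(p/2)$-th moments; setting $\delta := p/2 - 1 > 0$ puts us in a position to apply the covariance inequality of Lemma 1 in \cite{yoshihara} (already invoked in the proof of Lemma \ref{lem:indizesbeschr}), which, for any splitting of the $2c$ variables into two time-separated blocks with gap $g$, yields an error of order $\beta(g)^{\delta/(1+\delta)} = \beta(g)^{(p-2)/p}$.

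The mechanism by which degeneracy enters is the following: it suppresses any configuration in which a single variable is isolated in time. Order the $2c$ indices of a given term and consider a gap $g$ separating one index, say $i_c$ (assumed distinct from all others), from the rest. Splitting $h(Z_{\mathbf{i}})h(Z_{\mathbf{j}})$ at this gap and passing to the product of the two marginal block laws, the integration of the isolated variable against $\theta$ is precisely $\int h(z_1,\dots,z_{c-1},z)\,\mathrm{d}\theta(z) = 0$ by the degeneracy assumption; hence the product-measure term vanishes and Yoshihara's inequality gives $|\mathbb{E}[h(Z_{\mathbf{i}})h(Z_{\mathbf{j}})]| \le C\beta(g)^{(p-2)/p}$. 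In other words, a time-isolated singleton index costs a summable factor $\beta(g)^{(p-2)/p}$ rather than a free summation index. By contrast, when the gap separates the complete variable set of one kernel from that of the other, the product-measure term factorises into $\mathbb{E}[h(Z_{\mathbf{i}})]\mathbb{E}[h(Z_{\mathbf{j}})]$, which need not vanish, so such an inter-kernel gap stays free.

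I would then organise the summation by the cluster structure of the $2c$ indices, a cluster being a maximal block of indices obtained by merging any two whose separating gap falls below a threshold. If the indices form $k$ clusters, then since the $2c$ variables are distributed among them, $k > c$ forces at least one cluster to be a singleton; by the previous step this contributes a summable factor instead of a free anchor, so all such configurations are of lower order. The dominant configurations therefore carry at most $c$ clusters, each contributing one free anchor (a factor $n$), while the intra-cluster lags and the covariance error terms are summed off using the finiteness of $\sum_{g\ge 1} g^{c-1}\beta(g)^{(p-2)/p}$. This series converges precisely because $\beta(n) = O(n^{-r})$ with $r > cp/(p-2)$, which is exactly the stated mixing condition, since convergence is equivalent to $r(p-2)/p > c$. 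Collecting the bounds gives $\sum_{\mathbf{i},\mathbf{j}}|\mathbb{E}[h(Z_{\mathbf{i}})h(Z_{\mathbf{j}})]| = O(n^c)$ and hence $\mathbb{E}[V^2] = O(n^{-c})$.

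The hard part will be the combinatorial bookkeeping in the last step rather than any single analytic estimate. One must handle ties among the indices (which only decrease the number of distinct variables and hence the number of free anchors, so they are harmless but must be tracked), make precise the simultaneous use of the covariance inequality across several gaps, and verify that counting the configurations with a prescribed maximal gap introduces exactly the polynomial factor $g^{c-1}$ absorbed by the mixing rate. The conceptual content --- degeneracy annihilates any time-isolated single variable, capping the number of free summation indices at $c$ --- is clean; the labour lies in converting it into bounds uniform over all $2c$-index configurations, which is the technique shared by \cite{yoshihara} and \cite{arcones}.
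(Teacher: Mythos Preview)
Your approach is essentially the paper's (and Yoshihara's and Arcones'): expand $\mathbb{E}[V^2]$, invoke Lemma~1 of \cite{yoshihara} at a gap where degeneracy annihilates the product-measure term, and count. The paper does not phrase things via a cluster decomposition, though; it parametrises the sum directly by the largest \emph{intra-kernel} spread $k = \max\{|i_2-i_1|,|i_4-i_3|\}$ (for $c=2$), obtains the single bound $M\beta(k)^{(p-2)/p}$ for each non-diagonal term, and then counts: at most $O(n)$ choices for the first anchor, $k+1$ choices for the smaller spread, $O(n)$ for the second anchor, giving $n^2\sum_k(k+1)\beta(k)^{(p-2)/p}=O(n^2)$, with the obvious extension to general $c$ producing $n^c\sum_k(k+1)^{c-1}\beta(k)^{(p-2)/p}$. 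This is the same series you arrive at, reached by a more explicit route.

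Two small comments. First, one application of Yoshihara's inequality per term is enough---you split once, at a gap that isolates a single argument of one of the two copies of $h$---so your anticipated difficulty of ``simultaneous use of the covariance inequality across several gaps'' does not arise. Second, your cluster argument as written leaves the case of at most $c$ clusters slightly underspecified: with no isolated singleton you have not yet produced the $\beta$ factor needed to sum the intra-cluster lags. The paper sidesteps this by never introducing clusters; it always splits at a gap \emph{inside} one kernel's index set (so degeneracy applies regardless of how the other kernel's indices are interleaved), and that single choice of parametrisation both supplies the $\beta(k)^{(p-2)/p}$ and makes the $(k+1)^{c-1}$ factor appear directly. You may find it cleaner to adopt that parametrisation rather than the threshold-based clustering.
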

\begin{proof}
We will follow the basic idea of the proof of Lemma 2 in \cite{yoshihara}. First, consider the special case of $c = 2$. We have
$$
\mathbb{E}\left[\left(\sum_{1 \leq i_1, i_2 \leq n} h(Z_{i_1}, Z_{i_2})\right)^2\right] = \sum_{1 \leq i_1, ..., i_4 \leq n} \mathbb{E}[h(Z_{i_1}, Z_{i_2})h(Z_{i_3}, Z_{i_4})].
$$
Now due to the degeneracy of our kernel $h$, we can employ Lemma 1 in \cite{yoshihara} to obtain
$$
\mathbb{E}[h(Z_{i_1}, Z_{i_2})h(Z_{i_3}, Z_{i_4})] \leq M \cdot \beta\left(\max\{|i_2 - i_1|, |i_4 - i_3|\}\right)^{(p-2)/p}
$$
whenever $(i_1, i_2) \neq (i_3, i_4)$. Here, $M$ is some constant uniform in $i_1, ..., i_4$ and $n$. 

Let us first assume that $k := |i_2 - i_1| \geq |i_4 - i_3| =: l$. For any fixed value of $k$, we have at most $2(n-k)$ possible values for $i_1$. Furthermore, since $k \geq l \geq 0$, we have $k+1$ possible values for $l$ and, for any fixed $l$, at most $2(n-l)$ possible values for $i_3$. Writing
$$\mathcal{I} := \{(i_1, ..., i_4) ~|~ 1 \leq i_1, ..., i_4 \leq n, |i_2 - i_1| \geq |i_4 - i_3|, (i_1, i_2) \neq (i_3, i_4)\}$$
this gives us
\begin{align*}
\sum_{i_1, ..., i_4 \in \mathcal{I}} \mathbb{E}[h(Z_{i_1},Z_{i_2})h(Z_{i_3}, Z_{i_4})] &\leq \sum_{k=0}^{n-1} \sum_{i_1 = 1}^{n-k} \sum_{l=0}^{k} \sum_{i_3 = 1}^{n-l} M \beta(k)^{(p-2)/p} \\
&\leq 4M n^2 \sum_{k=0}^{n-1} (k+1)\beta(k)^{(p-2)/p} \\
&= O(n^2).
\end{align*}
The sum converges due to our assumptions on $\beta(n)$. The same bound can be established for the cases where $|i_4 - i_3| \geq |i_2 - i_1|$. The only combinations missing are those where $(i_1, i_2) = (i_3, i_4)$, of which there are $n^2$. We can combine these results to get
$$
\sum_{1 \leq i_1, ..., i_4 \leq n} \mathbb{E}[h(Z_{i_1}, Z_{i_2})h(Z_{i_3}, Z_{i_4})] = O(n^2),
$$ 
which proves the lemma in the case $c = 2$.

The proof for arbitrary $c$ follows the same idea. We then obtain an upper bound of
$$
2^c M n^c \sum_{k=0}^{n-1} (k+1)^{c-1} \beta(k)^{(p-2)/p} \leq  2^{2c-1} M n^c \sum_{k=0}^{n-1} (k^{c-1} + 1) \beta(k)^{(p-2)/p}
$$
which again is $O(n^c)$ due to our bounds on $\beta(n)$.
\end{proof}

\begin{theorem}
\label{thm:asymptotik}
Let $X$ and $Y$ be random variables with values in separable metric spaces $\mathcal{X}$ and $\mathcal{Y}$, respectively, and $Z := (X,Y)$. Write $\theta := \mathcal{L}(Z)$, $\mu := \mathcal{L}(X)$ and $\nu := \mathcal{L}(Y)$, and denote by $\theta_n$ the empirical measure of $Z_1, ..., Z_n$, where $(Z_k)_{k \in \mathbb{N}}$ is a strictly stationary and ergodic sequence with $\mathcal{L}(Z_1) = \theta$. 

Suppose that $\mathcal{X}$ and $\mathcal{Y}$ are of negative type via mappings $\phi$ and $\psi$, respectively, and that $\mathcal{X} \times \mathcal{Y}$ is $\sigma$-compact. If $X$ and $Y$ are independent, have finite $(1+\varepsilon)$-moments for some $\varepsilon > 0$, and the sequence $(Z_k)_{k \in \mathbb{N}}$ is absolutely regular with mixing coefficients $\beta(n) = O(n^{-r})$ for some $r > 6(1 + 2\varepsilon^{-1})$, then
$$
n\cdot\mathrm{dcov}(\theta_n) \xrightarrow[n \to \infty]{\mathcal{D}} \zeta := \sum_{k=1}^\infty \lambda_k \zeta_k^2 ,
$$
where the $\zeta_k$ are centered Gaussian random variables whose covariance function given in \eqref{eq:kovarianz} is determined by the dependence structure of the sequence $(Z_k)_{k \in \mathbb{N}}$, and the parameters $\lambda_k > 0$ are determined by the underlying distribution $\theta$.
\end{theorem}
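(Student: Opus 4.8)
The plan is to reduce $n\,\mathrm{dcov}(\theta_n)$ to a degenerate $V$-statistic of order $2$ with a fixed kernel, to which Theorem \ref{thm:rangzwei} applies, and then to show that the error caused by centering with $\theta_n$ rather than with $\theta$ is negligible. Since $X$ and $Y$ are independent we have $\theta = \mu\otimes\nu$, and the natural kernel is $h(z,z') := \delta_\theta(z,z') = 4\langle(\hat\phi\otimes\hat\psi)(z),(\hat\phi\otimes\hat\psi)(z')\rangle$ from \eqref{eq:deltatheta}. As \eqref{eq:deltatheta} holds for any measure with finite first moments, it also applies to $\theta_n$ with the empirically centered embeddings $\hat\phi_n := \phi - \beta_\phi(\mu_n)$ and $\hat\psi_n := \psi - \beta_\psi(\nu_n)$, where $\mu_n,\nu_n$ are the empirical marginals. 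This gives the two representations
\begin{equation*}
\mathrm{dcov}(\theta_n) = 4\left\|\int(\hat\phi_n\otimes\hat\psi_n)~\mathrm d\theta_n\right\|^2, \qquad V_h := \int h~\mathrm d\theta_n^2 = 4\left\|\int(\hat\phi\otimes\hat\psi)~\mathrm d\theta_n\right\|^2,
\end{equation*}
and I would split $n\,\mathrm{dcov}(\theta_n) = nV_h + n(\mathrm{dcov}(\theta_n) - V_h)$.

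For the main term I would verify the hypotheses of Theorem \ref{thm:rangzwei} for $h = \delta_\theta$ on $\mathcal Z = \mathcal X\times\mathcal Y$, which is $\sigma$-compact by assumption. Continuity follows from that of $\phi$ and $\psi$; symmetry and positive semidefiniteness are immediate from the Gram structure; and degeneracy is exactly where independence enters, since $\int h(z,\cdot)~\mathrm d\theta = 4\langle(\hat\phi\otimes\hat\psi)(z), \mathbb E[\hat\phi(X)\otimes\hat\psi(Y)]\rangle = 0$, because $X\perp Y$ yields $\mathbb E[\hat\phi(X)\otimes\hat\psi(Y)] = \mathbb E\hat\phi(X)\otimes\mathbb E\hat\psi(Y) = 0$. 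The moment conditions reduce to those on $X,Y$ via $\|\hat\phi(x)\|^2 = a_\mu(x) - \tfrac12 D(\mu) \le d(x,x_0) + \mathbb E\,d(X,x_0)$: the finite $(1+\varepsilon)$-moments give $\mathbb E\|\hat\phi(X)\|^{2+2\varepsilon},\,\mathbb E\|\hat\psi(Y)\|^{2+2\varepsilon} < \infty$, and hence, factorising over the independent marginals, finite $(2+2\varepsilon)$-moments of $h$ with respect to $\theta^2$ and finite $(1+\varepsilon)$-moments on the diagonal. Finally $\alpha(n)\le\beta(n) = O(n^{-r})$ with $r > 6(1+2\varepsilon^{-1}) > 1 + \varepsilon^{-1} = 1 + 2(2\varepsilon)^{-1}$, so Theorem \ref{thm:rangzwei} (applied with its parameter set to $2\varepsilon$) gives $nV_h \xrightarrow{\mathcal D} \sum_k\lambda_k\zeta_k^2$ with $\lambda_k$ and covariance \eqref{eq:kovarianz} as claimed; in particular $nV_h = O_P(1)$.

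For the correction term, expanding the two representations and using $\beta_\phi(\mu_n) - \beta_\phi(\mu) = n^{-1}\sum_k\hat\phi(X_k)$ yields the identity
\begin{equation*}
\int(\hat\phi_n\otimes\hat\psi_n)~\mathrm d\theta_n - \int(\hat\phi\otimes\hat\psi)~\mathrm d\theta_n = -\big(\beta_\phi(\mu_n)-\beta_\phi(\mu)\big)\otimes\big(\beta_\psi(\nu_n)-\beta_\psi(\nu)\big).
\end{equation*}
A second-moment computation together with a covariance inequality for $\alpha$-mixing Hilbert-space valued sequences (as in the proof of Theorem \ref{thm:rangzwei}) and the $(2+2\varepsilon)$-integrability of $\hat\phi(X)$ shows $\|\beta_\phi(\mu_n)-\beta_\phi(\mu)\| = O_P(n^{-1/2})$, and likewise for $\psi$, so the displayed difference has norm $O_P(n^{-1})$. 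Writing $\|a\|^2 - \|b\|^2 = \langle a-b,a+b\rangle$ and using $\|\int(\hat\phi\otimes\hat\psi)~\mathrm d\theta_n\| = O_P(n^{-1/2})$ (from $nV_h = O_P(1)$), I obtain $n(\mathrm{dcov}(\theta_n) - V_h) = O_P(n^{-1/2}) \to 0$, and Slutsky's theorem gives the assertion.

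The main obstacle is precisely this correction step, that is, quantifying the effect of the empirical centering. The tensor identity above makes it transparent, but it is worth noting that the same conclusion can be obtained by treating $\mathrm{dcov}(\theta_n) = V_{\bar h}$ as the symmetric $V$-statistic of order $6$ from the proof of Theorem \ref{thm:fs} and performing a Hoeffding decomposition about $\theta$: under independence the mean and first-order projection vanish, the order-$2$ degenerate part reproduces $V_h$, and the degenerate parts of orders $3,\dots,6$ are $o_P(1/n)$ by Lemma \ref{lem:varianzdeg}, whose moment hypothesis is supplied by the bound of Lemma \ref{lem:indizesbeschr} on the distance kernel $f$. It is exactly the worst case $c = 6$, $p = 2+\varepsilon$ in Lemma \ref{lem:varianzdeg} that forces the assumption $r > 6(1+2\varepsilon^{-1})$, and this moment and mixing bookkeeping, rather than the application of Theorem \ref{thm:rangzwei}, is the technically heaviest part.
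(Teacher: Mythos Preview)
Your proposal is correct, and your primary argument is genuinely different from the paper's. The paper proceeds entirely via the Hoeffding decomposition of the order-$6$ $V$-statistic $V_{\bar h}$: it computes $\bar h_2 = \delta_\theta/15$, applies Theorem~\ref{thm:rangzwei} to this second-order piece, and then invokes Lemmas~\ref{lem:indizesbeschr} and~\ref{lem:varianzdeg} to kill the degenerate components of orders $3$ through $6$. This is precisely the ``alternative'' route you sketch in your final paragraph.

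Your main route instead exploits the Hilbert-space representation directly: the tensor identity
\[
\int(\hat\phi_n\otimes\hat\psi_n)\,\mathrm d\theta_n - \int(\hat\phi\otimes\hat\psi)\,\mathrm d\theta_n = -\bigl(\beta_\phi(\mu_n)-\beta_\phi(\mu)\bigr)\otimes\bigl(\beta_\psi(\nu_n)-\beta_\psi(\nu)\bigr)
\]
reduces the entire centering error to a single rank-one tensor whose norm is controlled by two Hilbert-space CLT-type bounds. This is more transparent than tracking four Hoeffding components, and it buys something concrete: both your main term (Theorem~\ref{thm:rangzwei} with parameter $2\varepsilon$) and your correction term only require $r > 1 + \varepsilon^{-1}$, whereas the paper's bound on the order-$6$ component via Lemma~\ref{lem:varianzdeg} is what forces $r > 6(1+2\varepsilon^{-1})$. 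You notice this yourself at the end, but it is worth stating plainly: your direct argument actually proves the theorem under a substantially weaker mixing hypothesis than the one assumed. The paper's approach, on the other hand, does not lean on the negative-type embedding for the remainder step and so generalises more readily to settings without a convenient Hilbert-space structure.
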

\begin{proof}
Consider the identity $\mathrm{dcov}(\theta_n) = V_{\bar{h}}(Z_1, ..., Z_n) =: V$ as given in Theorem \ref{thm:fs}. We will employ Hoeffding decomposition, i.e.
$$
V = \sum_{c=0}^6 {6 \choose c} V_{\bar{h}_c}(Z_1, ..., Z_n),
$$
where
$$
\bar{h}_c(z_1, ..., z_c) = \sum_{A \subset \{1, ..., 6\}} (-1)^{6 - \#A} \int \bar{h}(z_1, ..., z_6) ~\mathrm{d}\theta^{6-c}(z_{c+1}, ..., z_6)
$$
for $0 \leq c \leq 6$. It can be readily seen that under the assumption of independence of $X$ and $Y$, $\bar{h}_1 = 0$ almost surely, and so the Hoeffding decomposition reduces to
\begin{equation}
\label{eq:hoeffding}
V = \sum_{c=2}^6 {6 \choose c} V_{\bar{h}_c}(Z_1, ..., Z_n).
\end{equation}
We will show that the kernel $\bar{h}_2$ satisfies the conditions of Theorem \ref{thm:rangzwei} and that, under our assumptions,
\begin{equation}
\label{eq:probkonvnull}
nV - nV_{\bar{h}_2}(Z_1, ..., Z_n)) \xrightarrow[n \to \infty]{\mathbb{P}} 0.
\end{equation}
Application of some algebra shows that $\bar{h}_2 = \delta_\theta/15$, proceeding in the following way:

It can be easily checked that under independence of $X$ and $Y$, $\bar{h}$ is a degenerate kernel, since integrating over all but one argument of $f$ (with respect to either of the marginal distributions of $\theta$) yields a function which is $0$ almost surely. Therefore, 
$$
\bar{h}_2(z_1, z_2) = \frac{1}{6!}\sum_{\sigma \in \mathfrak{S}_6} \int h(z_{\sigma(1)}, ..., z_{\sigma(6)}) ~\mathrm{d}\theta^4(z_3, ..., z_6),
$$
where $\mathfrak{S}_6$ is the symmetric group of all permutations operating on $\{1, ..., 6\}$. Notice that the summands are equal to $\delta_\theta(z_{\sigma(1)}, z_{\sigma(2)})$ if $\sigma(1), \sigma(2) \in \{1,2\}$. This follows directly from the definitions of $d_\mu$ and $d_\nu$. Moreover, $1$ and $2$ are the only indices appearing in both $f(X_1, ..., X_4)$ and $f(Y_1, Y_2, Y_5, Y_6)$, so any permutation $\sigma$ with $\sigma(1), \sigma(2) \notin \{1,2\}$ results in taking the integral of $f$ over all or all but one argument, either with respect to $\mu$ or with respect to $\nu$. But we have seen before that these integrals are $0$ almost surely, and so, due to the independence of $X$ and $Y$, the same is true for the integral of $h$ with respect to $\theta$.

There are $2\cdot 4!$ permutations of this kind, and so
$$
\bar{h}_2(z_1, z_2) = \frac{2\cdot 4!}{6!}\sum_{\sigma \in \mathfrak{S}_6} \delta_\theta(z_{\sigma(1)}, z_{\sigma(2)}) = \frac{1}{15} \delta_\theta(z_1, z_2).
$$
We can therefore consider the object $\delta_\theta$ instead of $\bar{h}_2$.

By identity \eqref{eq:deltatheta} we have, for any real constants $c_1, ..., c_m$ and $z_1, ..., z_m \in \mathcal{X} \times \mathcal{Y}$,
\begin{align*}
\sum_{i,j=1}^m c_i c_j \delta_\theta(z_i, z_j) &= 4\sum_{i,j=1}^m c_i c_j \langle (\hat{\phi} \otimes \hat{\psi})(z_i), \hat{\phi} \otimes \hat{\psi})(z_j)\rangle \\
&= 4 \left\langle \sum_{i=1}^m c_i (\hat{\phi} \otimes \hat{\psi})(z_i), \sum_{i=1}^m c_i (\hat{\phi} \otimes \hat{\psi})(z_i)\right\rangle\\
&= \left\|2\sum_{i=1}^m c_i (\hat{\phi} \otimes \hat{\psi})(z_i)\right\|^2 \geq 0,
\end{align*}
so our kernel is positive semidefinite. It is furthermore continuous. By Lemma \ref{lem:pseudosquint}, $\delta_\theta$ has finite $(2+\varepsilon)$-moments with respect to $\theta^2$ and finite $(1+\frac{\varepsilon}{2})$-moments on the diagonal. Since $2\alpha(n) \leq \beta(n)$ (cf. \cite{bradley}), we have
\begin{equation}
\label{eq:h2konv}
nV_{\bar{h}_2}(Z_1, ..., Z_n) \xrightarrow[n \to \infty]{\mathcal{D}} \sum_{k=1}^\infty \lambda_k \zeta_k^2
\end{equation}
by Theorem \ref{thm:rangzwei}.

We will now prove \eqref{eq:probkonvnull}. For this, we will first note that under our assumptions, the kernel $\bar{h}$ has finite $(2+\varepsilon)$-moments with respect to $\theta^6$. This can be seen with a similar approach as in the proof of Lemma \ref{lem:pseudosquint}. Furthermore, Lemma \ref{lem:indizesbeschr} together with the independence of $X$ and $Y$ gives us the existence of an upper bound $M \in \mathbb{R}$ such that
$$
\mathbb{E}\left[\bar{h}(Z_{i_1}, ..., Z_{i_6})^{2+\varepsilon}\right] \leq M < \infty
$$
for any collection of indices $1 \leq i_1, ..., i_6 \leq n$.

Employing Lemma \ref{lem:varianzdeg} therefore gives us
$$
\mathbb{E}\left[V_{\bar{h}_c}(Z_1, ..., Z_n)^2\right] = O(n^{-c})
$$
for all $c \geq 2$. Now, together with \eqref{eq:hoeffding}, we have
\begin{align}
\label{eq:2normdiff}
\begin{split}
\mathbb{E}\left[(nV - nV_{\bar{h}_2}(Z_1, ..., Z_n))^2\right] &= \mathbb{E}\left[\left(n\sum_{c=3}^6 {6 \choose c} V_{\bar{h}_c}(Z_1, ..., Z_n)\right)^2\right] \\
&\leq 4n^2 \sum_{c=3}^6 \mathbb{E}\left[V_{\bar{h}_c}(Z_1, ..., Z_n)^2\right] \\
&= \sum_{c=3}^6 O(n^{2-c}) = O(n^{-1}).
\end{split}
\end{align}
This implies \eqref{eq:probkonvnull}, which together with \eqref{eq:h2konv} proves the Theorem.
\end{proof}

Using these two results, we can generalise Corallary 2.8 from \cite{lyons}. 

\begin{corollary}
Under the assumptions of Theorem \ref{thm:asymptotik}, we have
$$
n\frac{\mathrm{dcov}(\theta_n)}{D(\mu_n)D(\nu_n)} \xrightarrow[n \to \infty]{\mathcal{D}} \frac{\sum_{k=1}^\infty\lambda_k \zeta_k^2}{D(\mu)D(\nu)} =:Q
$$
with $\mathbb{E}Q = 1$. If $\mathrm{dcov}(\theta) > 0$, i.e. $\theta$ is not the product measure of its marginal distributions $\mu$ and $\nu$, the left hand side converges to $\infty$ almost surely.
\end{corollary}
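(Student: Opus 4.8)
The plan is to treat the two assertions of the corollary separately, reducing each to results already in hand. For the weak-convergence statement I would combine Theorem~\ref{thm:asymptotik}, which already gives $n\,\mathrm{dcov}(\theta_n)\xrightarrow{\mathcal D}\zeta=\sum_{k}\lambda_k\zeta_k^2$, with an almost-sure analysis of the normalising factor $D(\mu_n)D(\nu_n)$. Indeed, $D(\mu_n)=\int d(x,x')\,\mathrm d\mu_n^2(x,x')$ is a $V$-statistic of order two whose kernel $(x,x')\mapsto d(x,x')$ is dominated by $f(x)f(x')$ with $f(x):=2\lor d(x,x_0)$, a continuous and $\mu$-integrable function under the finite-moment hypothesis; the marginal sequence $(X_k)_k$ is strictly stationary and ergodic as a coordinate factor of $(Z_k)_k$. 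Lemma~\ref{lem:fskonvergenz} therefore yields $D(\mu_n)\to D(\mu)$ almost surely, and symmetrically $D(\nu_n)\to D(\nu)$. Since $\mathcal X$ and $\mathcal Y$ are of negative type and the marginals are non-degenerate, $D(\mu),D(\nu)\in(0,\infty)$, so $D(\mu_n)D(\nu_n)\to D(\mu)D(\nu)\in(0,\infty)$ almost surely. Slutsky's theorem (division by a positive almost-sure constant limit) then gives $n\,\mathrm{dcov}(\theta_n)/(D(\mu_n)D(\nu_n))\xrightarrow{\mathcal D}Q$.

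For the identity $\mathbb E Q=1$ I would compute $\mathbb E\zeta=\sum_k\lambda_k\,\mathbb E\zeta_k^2$ and compare it with $D(\mu)D(\nu)$. The crucial algebraic input is the trace identity $\sum_k\lambda_k=\mathbb E[\delta_\theta(Z_1,Z_1)]$, obtained by evaluating the Mercer expansion $\delta_\theta(z,z')=\sum_k\lambda_k\varphi_k(z)\varphi_k(z')$ on the diagonal and using orthonormality of the $\varphi_k$ in $L^2(\theta)$. Evaluating the diagonal directly, $\delta_\theta(z,z)=(D(\mu)-2a_\mu(x))(D(\nu)-2a_\nu(y))$, and since $\mathbb E a_\mu(X)=D(\mu)$ and $\mathbb E a_\nu(Y)=D(\nu)$, independence of $X$ and $Y$ gives $\mathbb E[\delta_\theta(Z_1,Z_1)]=(-D(\mu))(-D(\nu))=D(\mu)D(\nu)$. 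Thus $\mathbb E Q=\big(\sum_k\lambda_k\,\mathbb E\zeta_k^2\big)/\sum_k\lambda_k$, and the claim is equivalent to the diagonal terms of the long-run variance \eqref{eq:kovarianz} being the only surviving contribution.

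This last point is where I expect the real difficulty to lie. Writing $\mathbb E\zeta_k^2=\lim_n n^{-1}\sum_{t,u}\mathrm{Cov}(\varphi_k(Z_t),\varphi_k(Z_u))$, the diagonal ($t=u$) part contributes exactly $1$ by orthonormality, reproducing $\sum_k\lambda_k=D(\mu)D(\nu)$; after interchanging the (absolutely summable) series in $k$ with the Cesàro limit, the off-diagonal part contributes $2\sum_{d\ge1}\mathbb E[\delta_\theta(Z_1,Z_{1+d})]/(D(\mu)D(\nu))$. Hence $\mathbb E Q=1$ reduces to the vanishing of $\sum_{d\ge1}\mathbb E[\delta_\theta(Z_1,Z_{1+d})]$. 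This is automatic for independent samples, but under genuine dependence these serial cross-moments need not cancel, so the proof must either exploit additional structure of the null model or establish this cancellation directly; verifying it is the main obstacle.

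Finally, the divergence under $\mathrm{dcov}(\theta)>0$ needs only the almost-sure statements and not independence: Theorem~\ref{thm:fs} gives $\mathrm{dcov}(\theta_n)\to\mathrm{dcov}(\theta)>0$ almost surely, while the $V$-statistic argument above gives $D(\mu_n)D(\nu_n)\to D(\mu)D(\nu)\in(0,\infty)$ almost surely. Therefore the ratio $\mathrm{dcov}(\theta_n)/(D(\mu_n)D(\nu_n))$ is almost surely eventually bounded below by a positive constant, and multiplication by $n$ forces the left-hand side to $+\infty$ almost surely.
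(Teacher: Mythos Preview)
Your treatment of the convergence in distribution and of the almost-sure divergence under $\mathrm{dcov}(\theta)>0$ is essentially identical to the paper's: both use Lemma~\ref{lem:fskonvergenz} to get $D(\mu_n)\to D(\mu)$ and $D(\nu_n)\to D(\nu)$ almost surely, then apply Slutsky's theorem together with Theorem~\ref{thm:asymptotik}, and both invoke Theorem~\ref{thm:fs} for the alternative.

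Where you diverge from the paper is exactly at the claim $\mathbb{E}Q=1$. The paper's entire argument for this is the single line ``Since $D(\mu)D(\nu)=\mathbb{E}\delta_\theta(Z_1,Z_1)=\sum_{k=1}^\infty\lambda_k$, the expected value of the limiting distribution is equal to $1$.'' This tacitly uses $\mathbb{E}\zeta_k^2=1$, which is the iid value. You are right to flag that under serial dependence $\mathbb{E}\zeta_k^2$ is the long-run variance in \eqref{eq:kovarianz}, and that after summing against $\lambda_k$ the claim reduces to $\sum_{d\ge 1}\mathbb{E}[\delta_\theta(Z_1,Z_{1+d})]=0$. The paper does not address this, and your hesitation is justified: if, say, the processes $(X_k)$ and $(Y_k)$ are independent of each other but each has nontrivial serial dependence, then $\mathbb{E}[\delta_\theta(Z_1,Z_{1+d})]=\mathbb{E}[d_\mu(X_1,X_{1+d})]\,\mathbb{E}[d_\nu(Y_1,Y_{1+d})]$, and each factor equals $\mathbb{E}[d(X_1,X_{1+d})]-D(\mu)$, which is generically nonzero. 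So the step you identify as ``the main obstacle'' is precisely the point the paper glosses over; you have not missed an argument that is present in the paper --- the paper simply does not supply one.
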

\begin{proof}
We have the identity $D(\mu_n) = n^{-2} \sum_{k,l=1}^n d(X_k, X_l)$, and thus by Lemma \ref{lem:fskonvergenz} $D(\mu_n) \xrightarrow{a.s.} D(\mu)$. The same holds for $D(\nu_n)$, and thus the convergence in distribution follows with the Slutsky theorem. Since $D(\mu)D(\nu) = \mathbb{E}\delta_\theta(Z_1, Z_1) = \sum_{k=1}^\infty \lambda_k$, the expected value of the limiting distribution is equal to $1$.

If $\mathrm{dcov}(\theta) > 0$, the almost sure convergence follows by Theorem \ref{thm:fs}.
\end{proof}

\begin{remark}
It would be desirable to achieve a result similar to Theorem \ref{thm:asymptotik} under the assumption of just $\alpha$-mixing. For example, Theorem 3.2 in \cite{davistime} gives such a result under the supposition that $X$ and $Y$ are real-valued random vectors.

For our more general setting of (pseudo-)metric spaces, one only needs to show that \eqref{eq:probkonvnull} still holds in the case of $\alpha$-mixing, since Theorem \ref{thm:rangzwei} does not require absolute regularity. We consider it likely that this can indeed be derived from the amicable properties of the distance covariance.
\end{remark}

\section{Generalisation to pseudometric spaces}
\label{sec:pseudometricspaces}
Let $(\mathcal{X}, d)$ be a metric space and consider $d^\beta$ for $\beta \in (0,2]$. Then $d^\beta$ is a pseudometric, i.e. the triangle inequality does not necessarily hold for $d^\beta$. We will develop parts of the theory of \cite{lyons} for pseudometric spaces of this particular kind, which we will refer to as $\beta$-pseudometric spaces. This is of interest if one considers $\mathrm{dcov}_\beta$, a generalisation of the usual distance covariance, which results from using the $\beta$-th power of the metrics on $\mathcal{X}$ and $\mathcal{Y}$ for the definition of $d_\mu$ and $d_\nu$. That is, $\mathrm{dcov}_\beta$ with respect to $(\mathcal{X}, d)$ and $(\mathcal{Y}, d)$ is equivalent to the regular distance covariance with respect to the $\beta$-pseudometric spaces $(\mathcal{X}, d^\beta)$ and $(\mathcal{Y}, d^\beta)$. Obviously, for any constant $\beta > 0$, $d^\beta$ induces the same topology (and thus, the same Borel $\sigma$-algebra) as the original metric $d$. This means that any $\beta$-pseudometric space is a metrizable topological space.

This approach of viewing $\mathrm{dcov}_\beta$ not as a different object on the same space, but as the same object on a different space might not be very intuitive at first. However, since the concept of (strong) negative type does not require a metric space, this characterisation allows us to still use the relation between (strong) negative type of the underlying space and the distance covariance. This leads to the question of whether $(\mathcal{X}, d^\beta)$ is of (strong) negative type, given the original metric space $(\mathcal{X}, d)$, for which some criteria are known -- see for example Corollary \ref{cor:embedding} or, more generally, \cite{li} and \cite{schoenberg}.

Note that if $\beta \in (0, 1]$, $d^\beta$ is indeed still a metric, and we can rely on the already developed  theory for separable metric spaces. Thus, we get the following result.

\begin{corollary}
Let $\beta \in (0,1]$. Theorems \ref{thm:fs} and \ref{thm:asymptotik} still hold for $\mathrm{dcov}_\beta$ if we replace the finite first moment condition of Theorem \ref{thm:fs} and the finite $(1+\varepsilon)$-moment condition of Theorem \ref{thm:asymptotik} by finite $\beta$- and $(1+\varepsilon)\beta$-moment assumptions, respectively.
\end{corollary}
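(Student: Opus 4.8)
The plan is to reduce everything to the already-established results for genuine separable metric spaces, exploiting the fact that the restriction $\beta \in (0,1]$ keeps us inside the metric (rather than merely pseudometric) world. The crucial observation is that for $\beta \in (0,1]$ the function $t \mapsto t^\beta$ is concave on $[0,\infty)$ and hence subadditive, so that $(a+b)^\beta \leq a^\beta + b^\beta$ for all $a,b \geq 0$. Applying this to the triangle inequality for $d$ yields
\[
d(x,z)^\beta \leq \bigl(d(x,y) + d(y,z)\bigr)^\beta \leq d(x,y)^\beta + d(y,z)^\beta,
\]
so that $d^\beta$ satisfies the triangle inequality and is therefore a bona fide metric. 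Consequently $(\mathcal{X}, d^\beta)$ and $(\mathcal{Y}, d^\beta)$ are honest metric spaces, and $\mathrm{dcov}_\beta$ computed with respect to $(\mathcal{X}, d)$ and $(\mathcal{Y}, d)$ is, by the discussion preceding the corollary, literally the ordinary distance covariance $\mathrm{dcov}$ computed on these metric spaces.

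Next I would verify that the structural hypotheses of Theorems \ref{thm:fs} and \ref{thm:asymptotik} transfer. Since $d^\beta$ induces the same topology as $d$, the spaces $(\mathcal{X}, d^\beta)$ and $(\mathcal{Y}, d^\beta)$ remain separable and share the same Borel $\sigma$-algebra; likewise $\sigma$-compactness of $\mathcal{X} \times \mathcal{Y}$ is a purely topological property and is unaffected. Ergodicity and strict stationarity of $(Z_k)$, as well as the absolute regularity condition $\beta(n) = O(n^{-r})$ used in Theorem \ref{thm:asymptotik}, depend only on the measure-theoretic structure of the process and are therefore preserved verbatim. The negative-type assumption needed for Theorem \ref{thm:asymptotik} is simply imposed directly on the relevant embeddings of $(\mathcal{X}, d^\beta)$ and $(\mathcal{Y}, d^\beta)$, exactly as in the hypotheses of that theorem.

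It then remains to translate the moment conditions, which is the only substantive bookkeeping. The finite first moment condition of Theorem \ref{thm:fs} applied to $(\mathcal{X}, d^\beta)$ reads $\mathbb{E}\,d^\beta(X, x_0) = \mathbb{E}\,d(X, x_0)^\beta < \infty$, i.e. precisely a finite $\beta$-moment for the original metric $d$; the same holds for $Y$. Analogously, the finite $(1+\varepsilon)$-moment condition of Theorem \ref{thm:asymptotik} becomes
\[
\mathbb{E}\,\bigl(d^\beta(X, x_0)\bigr)^{1+\varepsilon} = \mathbb{E}\,d(X, x_0)^{(1+\varepsilon)\beta} < \infty,
\]
which is exactly the finite $(1+\varepsilon)\beta$-moment assumption in the statement. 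With these identifications, Theorems \ref{thm:fs} and \ref{thm:asymptotik} apply directly to the metric spaces $(\mathcal{X}, d^\beta)$ and $(\mathcal{Y}, d^\beta)$ and yield the claimed convergence for $\mathrm{dcov}_\beta$.

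I do not anticipate a genuine obstacle here: the content of the corollary is essentially the single observation that $d^\beta$ remains a metric for $\beta \leq 1$, after which the result is an immediate specialisation. The only point demanding care is to state the moment translation cleanly, making sure the exponents compose correctly ($\beta$-moments of $d$ corresponding to first moments of $d^\beta$, and $(1+\varepsilon)\beta$-moments of $d$ to $(1+\varepsilon)$-moments of $d^\beta$), so that the hypotheses of the invoked theorems are met exactly as stated.
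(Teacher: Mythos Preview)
Your proposal is correct and follows essentially the same route as the paper: both arguments rest on the single observation that $d^\beta$ is a genuine metric for $\beta\in(0,1]$, so that $\mathrm{dcov}_\beta$ is just the ordinary distance covariance on $(\mathcal{X},d^\beta)$ and $(\mathcal{Y},d^\beta)$, and the moment conditions translate exactly as you wrote.

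There is one small point where the paper does a bit more than you do. For Theorem~\ref{thm:asymptotik} the hypothesis is that $(\mathcal{X},d)$ and $(\mathcal{Y},d)$ are of negative type, and the corollary asserts the conclusion for $\mathrm{dcov}_\beta$ under the \emph{same} structural hypotheses (only the moment conditions change). You write that negative type ``is simply imposed directly'' on $(\mathcal{X},d^\beta)$, which reads as re-stating the hypothesis on the new space rather than deducing it from the original one. The paper closes this small gap by invoking Remark~3.19 in \cite{lyons}, which guarantees that if $(\mathcal{X},d)$ is of negative type then so is $(\mathcal{X},d^\beta)$ for $\beta\in(0,1]$. With that one citation added, your argument matches the paper's.
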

\begin{proof}
Theorem 1 follows immediately. For Theorem 2, we note that $d^\beta$ induces the same Borel $\sigma$-algebra as $d$. Furthermore, by Remark 3.19 in \cite{lyons}, the resulting metric spaces are still of negative type.
\end{proof}

For $\beta \in (1,2)$, while we cannot rely on the triangle inequality, the Jensen inequality gives us a result which we will call the \textit{weak triangle inequality}. Specifically, for any $\beta \in [1,2]$:
\begin{equation}
\label{eq:weaktriangle}
d^\beta(x, x') \leq 2^{\beta-1} \{d^\beta(x,x_0) + d^\beta(x_0, x')\}
\end{equation}
for all $x, x', x_0 \in \mathcal{X}$. This can be further bounded by replacing the factor $2^{\beta-1}$ by $2$.

Like in the metric case, we say that a probability measure $\mu$ has finite first moment if there exists an element $x_0 \in \mathcal{X}$ such that $\int d(x,x_0) ~\mathrm{d}\mu(x) < \infty$. Again, the choice of $x_0$ is arbitrary due to the weak triangle inequality. Thus, we can define the objects $a_\mu$, $D(\mu)$ and $d_\mu$ as in the metric case.

\begin{lemma}
\label{lem:pseudosquint}
If $\mu$ has finite $\beta p$-moment, then $d_\mu^{(\beta)}$ has finite $2p$-moment with respect to $\mu^2$ and finite $p$-moment on the diagonal for any $p \geq 1$.
\end{lemma}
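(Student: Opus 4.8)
The plan is to exploit the fact that, on a $\beta$-pseudometric space of negative type via an embedding $\phi$, the double-centered kernel $d_\mu^{(\beta)}$ admits a Hilbert-space representation analogous to \eqref{eq:deltatheta} but for a single factor. With $\hat\phi = \phi - \beta_\phi(\mu)$ as in the paper, a direct expansion of $a_\mu$ and $D(\mu)$ via $d^\beta(x,x') = \|\phi(x)-\phi(x')\|^2$ shows that all squared-norm terms cancel, leaving $d_\mu^{(\beta)}(x,x') = -2\langle\hat\phi(x),\hat\phi(x')\rangle$ for all $x,x'$. This single-space analogue of Proposition 3.5 in \cite{lyons} is the identity that makes a sharp moment estimate possible.

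From this representation, Cauchy--Schwarz gives $|d_\mu^{(\beta)}(x,x')| \le 2\,\|\hat\phi(x)\|\,\|\hat\phi(x')\|$, so everything reduces to controlling $\|\hat\phi(x)\|$. Here I would apply Jensen's inequality to the Bochner integral, using $\|\phi(x)-\phi(w)\| = d^{\beta/2}(x,w)$, to get $\|\hat\phi(x)\| = \|\int (\phi(x)-\phi(w))\,\mathrm{d}\mu(w)\| \le \int d^{\beta/2}(x,w)\,\mathrm{d}\mu(w)$. Since $\beta/2 \le 1$, the map $t \mapsto t^{\beta/2}$ is subadditive, so the triangle inequality of the base metric yields $\|\hat\phi(x)\| \le d^{\beta/2}(x,x_0) + m_0$, where $m_0 := \int d^{\beta/2}(x_0,w)\,\mathrm{d}\mu(w)$ is finite because a finite $\beta p$-moment with $p \ge 1$ entails a finite $\beta/2$-moment. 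The decisive point is that each argument now enters only through the half power $d^{\beta/2}(\cdot,x_0)$.

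It then remains to integrate. Off the diagonal, $\mu^2$ is a product measure, so Fubini gives $\int |d_\mu^{(\beta)}|^{2p}\,\mathrm{d}\mu^2 \le 2^{2p}\big(\int [d^{\beta/2}(x,x_0)+m_0]^{2p}\,\mathrm{d}\mu(x)\big)^2$; expanding $[d^{\beta/2}(x,x_0)+m_0]^{2p}$ and using $(d^{\beta/2})^{2p} = d^{\beta p}$ shows the inner integral is finite exactly under the $\beta p$-moment hypothesis. On the diagonal the representation collapses to $d_\mu^{(\beta)}(x,x) = -2\|\hat\phi(x)\|^2$, so $\int |d_\mu^{(\beta)}(x,x)|^p\,\mathrm{d}\mu(x) \le 2^p \int [d^{\beta/2}(x,x_0)+m_0]^{2p}\,\mathrm{d}\mu(x)$ is finite by the same estimate, which is why the diagonal requires only a $p$-moment of the kernel.

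The step I expect to be the crux is obtaining the half power $d^{\beta/2}$ in the bound on $\|\hat\phi(x)\|$, i.e. genuinely using negative type. Estimating $d_\mu^{(\beta)}$ from the weak triangle inequality \eqref{eq:weaktriangle} alone only yields a sum $d^\beta(x,x_0) + d^\beta(x',x_0)$ plus constants, and raising this to the power $2p$ would force a finite $2\beta p$-moment, far more than is assumed; indeed the clean product bound fails on metric spaces for which $d^\beta$ is not of negative type. It is the Cauchy--Schwarz splitting, together with the passage from $d^\beta$ to its square root inside the norm, that distributes the growth evenly between the two arguments and matches the exponent to the stated $\beta p$-moment condition.
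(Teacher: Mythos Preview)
Your argument is internally sound, but it proves a weaker statement than the lemma: you have added the hypothesis that $(\mathcal{X},d^\beta)$ is of negative type, which the lemma does not assume. The lemma is stated (and used, e.g.\ in Lemma~\ref{lem:indizesbeschr}) for an arbitrary $\beta$-pseudometric space, before any embedding $\phi$ is available. So as a proof of the stated lemma, your proposal has a genuine gap.

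More importantly, your closing diagnosis is wrong. You assert that without negative type the weak triangle inequality can only give a sum bound $d^\beta(x,x_0)+d^\beta(x',x_0)$, forcing a $2\beta p$-moment. The paper shows this is not so. Writing
\[
f(x_1,\dots,x_4)=d^\beta(x_1,x_2)-d^\beta(x_1,x_3)-d^\beta(x_2,x_4)+d^\beta(x_3,x_4),
\]
one has $d_\mu^{(\beta)}(x_1,x_2)=\int f\,\mathrm{d}\mu^2(x_3,x_4)$, and the weak triangle inequality applied \emph{with} the built-in cancellation of $f$ yields $|f|\le 4\,d^\beta(x_2,x_3)$ as well as $|f|\le 4\,d^\beta(x_1,x_4)$. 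Squaring $d_\mu^{(\beta)}$ introduces a second copy $f(x_1,x_2,x_5,x_6)$; bounding the two factors by the two different estimates gives a product $16\,d^\beta(x_2,x_3)\,d^\beta(x_1,x_6)$ with \emph{separated} variables. Jensen on the inner integral plus Fubini then produces exactly the $\beta p$-moment condition, and the diagonal case follows from the single bound $|f(x,x,x_3,x_4)|\le 4\,d^\beta(x,x_3)$. No Hilbert-space embedding is needed.

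So the ``clean product bound'' does not fail without negative type; the cancellation structure of the double-centered kernel already supplies it. Your Cauchy--Schwarz route is a nice alternative when an embedding exists, but the elementary $f$-bound is both more general and what the paper actually does.
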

\begin{proof}
We take inspiration from the proof of Proposition 2.6 in \cite{lyons}. Define the functions
$$
f(x_1, ..., x_4) := d^\beta(x_1, x_2) - d^\beta(x_1, x_3) - d^\beta(x_2, x_4) + d^\beta(x_3, x_4)
$$
and
$$
h(x_1, ..., x_6) := f(x_1, ..., x_4)f(x_1, x_2, x_5, x_6)
$$
We have
\begin{align*}
f(x_1, ... x_4) \leq 2d^\beta(x_1, x_2) - d^\beta(x_1, x_3) - d^\beta(x_2, x_4) + 2d^\beta(x_3, x_4) =: f_+
\end{align*}
and, using the weak triangle inequality, $|f_+| \leq 4d^\beta(x_2,x_3)$. Similarly, we have
$$
f(x_1, ..., x_4) \geq d^\beta(x_1, x_2) - 2d^\beta(x_1, x_3) - 2d^\beta(x_2, x_4) + d^\beta(x_3, x_4) =: f_-.
$$
Again, $|f_-| \leq 4d^\beta(x_2,  x_3)$, and thus $|f(x_1, ..., x_4)| \leq 4d^\beta(x_2, x_3)$. In the same way, one shows that the absolute value of $f(x_1, ..., x_4)$ can also be bounded by $4d^\beta(x_1, x_4)$. Therefore $|h(x_1, ..., x_6)| \leq 16 d^\beta(x_2, x_3)d^\beta(x_1, x_4)$, and so
\begin{align*}
\int |d_\mu^{(\beta)}(x_1, x_2)|^{2p} ~\mathrm{d}\mu^2(x_1, x_2) &= \int\left|\int h(x_1, ..., x_6) ~\mathrm{d}\mu^4(x_3, ..., x_6)\right|^p~\mathrm{d}\mu^2(x_1, x_2) \\
&\leq 16^p\int d^{\beta p}(x_2, x_3)d^{\beta p}(x_1, x_4) ~\mathrm{d}\mu^4(x_1, ..., x_4) \\
&= \left(4^{p/2}\int d^{\beta p}(x,x') ~\mathrm{d}^2(x,x')\right)^2 < \infty.
\end{align*} 
Furthermore, we have
\begin{align*}
\int |d_\mu^{(\beta)}(x,x)|^p ~\mathrm{d}\mu(x) &= \int\left|\int f(x, x, x_3, ..., x_6) ~\mathrm{d}\mu^2(x_3, x_4)\right|^p~\mathrm{d}\mu(x) \\
&\leq 4^p \int d^{\beta p}(x, x_3) ~\mathrm{d}\mu^2(x,x_3) < \infty,
\end{align*}
i.e. $d_\mu^{(\beta)}$ has finite $p$-moment on the diagonal.
\end{proof}
We can now define $\delta_\theta$ and $\mathrm{dcov}(\theta)$ analogously to the metric case. Since the relevant proofs do not make use of the triangle inequality, it follows from \cite{lyons} that for pseudometric spaces of strong negative type $\theta = \mu \otimes \nu$ if and only if $\mathrm{dcov}(\theta) = 0$. This, together with the next Lemma, gives a very easy proof of Theorem 4.2 in \cite{dehlingprozesse}.

\begin{lemma}
\label{lem:hilbertnegativ}
If $(H, \|.\|)$ is a separable Hilbert space, then $(H, \|.\|^\beta)$ is of negative type for all $\beta \in (0,2]$, and of strong negative type for all $\beta \in (0,2)$.
\end{lemma}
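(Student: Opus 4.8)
The plan is to reduce both assertions to the conditional negative definiteness of the kernel $(x,x') \mapsto \|x-x'\|^\beta$ on $H$ and then to exploit the subordination identity $s^{\beta/2} = c_\beta \int_0^\infty (1 - e^{-us})\,u^{-1-\beta/2}\,\mathrm{d}u$, valid with a positive constant $c_\beta$ for $\beta \in (0,2)$. First I would recall Schoenberg's characterisation (see \cite{schoenberg}): in the sense used here, $(\mathcal{X},\rho)$ is of negative type precisely when $\rho$ is conditionally negative definite, that is $\sum_{i,j} c_i c_j \rho(x_i,x_j) \le 0$ for all finite families $x_1,\dots,x_m$ and all real $c_1,\dots,c_m$ with $\sum_i c_i = 0$, and the embedding $\phi$ is recovered from the kernel. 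The kernel $\|x-x'\|^2$ is trivially of this type, since $\sum_{i,j} c_i c_j \|x_i-x_j\|^2 = -2\|\sum_i c_i x_i\|^2 \le 0$ whenever $\sum_i c_i = 0$.

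To reach the exponent $\beta$ I would use that raising a non-negative conditionally negative definite kernel to a power $\alpha \in (0,1]$ preserves this property. For $\alpha = \beta/2 \in (0,1)$ this follows from Schoenberg's theorem, by which $e^{-u\|x-x'\|^2}$ is positive definite for every $u>0$: inserting the subordination identity and integrating term by term gives $\sum_{i,j} c_i c_j \|x_i-x_j\|^\beta = -c_\beta \int_0^\infty u^{-1-\beta/2} \sum_{i,j} c_i c_j e^{-u\|x_i-x_j\|^2}\,\mathrm{d}u \le 0$ for $\sum_i c_i = 0$. This yields negative type for all $\beta \in (0,2)$, and the case $\beta = 2$ is immediate from the identity embedding.

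For strong negative type I would use the identity, valid for any embedding of negative type and any signed measure $\sigma := \mu_1 - \mu_2$ of total mass zero and finite first moment, $D(\sigma) = -2\|\int \phi\,\mathrm{d}\sigma\|^2$ (cf.\ \cite{lyons}), which reduces the claim to injectivity of the barycentre map $\mu \mapsto \int \phi\,\mathrm{d}\mu$. Applying the subordination identity to $D(\sigma)$ and discarding the constant term via $\sigma(H)=0$, I obtain $D(\sigma) = -c_\beta \int_0^\infty u^{-1-\beta/2} Q(u)\,\mathrm{d}u$, where $Q(u) := \int\int e^{-u\|x-x'\|^2}\,\mathrm{d}\sigma(x)\,\mathrm{d}\sigma(x') \ge 0$ by positive definiteness of the Gaussian kernel. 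Hence $D(\sigma)=0$ forces $Q(u)=0$ for almost every, and by continuity every, $u>0$. Here the restriction $\beta<2$ is essential: at $\beta=2$ the subordination representation degenerates and $D(\sigma)$ controls only the mean of $\sigma$, so strong negative type genuinely fails.

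It remains to deduce $\sigma = 0$ from $Q \equiv 0$, which is the heart of the matter and the step I expect to be the main obstacle, as it amounts to the strict positive definiteness (the \emph{characteristic} property) of the Gaussian kernel on an infinite-dimensional space. Writing $Q(u) = \|\int \Phi_u\,\mathrm{d}\sigma\|^2$ for the feature map $\Phi_u$ of the Gaussian kernel, the vanishing of $Q(u)$ means $\int \Phi_u\,\mathrm{d}\sigma = 0$, and the reproducing property then gives $\int e^{-u\|x-a\|^2}\,\mathrm{d}\sigma(x) = 0$ for every $a \in H$ and every $u>0$. Fixing a finite-dimensional subspace $F_N = \mathrm{span}(e_1,\dots,e_N)$ and factoring $\|x-a\|^2 = \|P_N x - a\|^2 + \|(I-P_N)x\|^2$ for $a \in F_N$, this states that the push-forward under $P_N$ of the measure $e^{-u\|(I-P_N)x\|^2}\,\mathrm{d}\sigma(x)$ has vanishing Gaussian (Weierstrass) transform on $\mathbb{R}^N$; since the Fourier transform of the heat kernel is strictly positive, the finite-dimensional uniqueness theorem forces this push-forward to be the zero measure for every $u$. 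Letting $u \to 0$ and invoking dominated convergence recovers $(P_N)_*\sigma = 0$ for every $N$, and as finite-dimensional projections determine finite Borel measures on a separable Hilbert space, $\sigma = 0$, i.e.\ $\mu_1 = \mu_2$. This establishes strong negative type for all $\beta \in (0,2)$.
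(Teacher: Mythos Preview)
Your argument is correct, but it takes a quite different route from the paper's. The paper's proof is essentially two citations: Schoenberg's embedding theorem (Theorem~5 in \cite{schoenberg}) provides, for each $\beta\in(0,2]$, an isometric embedding of $(H,\|\cdot\|^{\beta/2})$ into $L^2[0,1]$, which is precisely negative type; strong negative type for $\beta\in(0,2)$ is then obtained by invoking Remark~3.19 of \cite{lyons}, which the author observes carries over to the pseudometric setting.

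Your proof, by contrast, is largely self-contained. You use the Bernstein subordination identity $s^{\beta/2}=c_\beta\int_0^\infty(1-e^{-us})\,u^{-1-\beta/2}\,\mathrm{d}u$ to transfer conditional negative definiteness from $\|x-x'\|^2$ to $\|x-x'\|^\beta$, and then, for strong negative type, you reduce $D(\sigma)=0$ to the vanishing of $Q(u)=\int\!\!\int e^{-u\|x-x'\|^2}\,\mathrm{d}\sigma^2$ for all $u>0$, and finally establish that the Gaussian kernel is characteristic on a separable Hilbert space via a projection-to-finite-dimensions plus Fourier-uniqueness argument. This is more work, but it exposes \emph{why} the result holds and isolates the only genuinely nontrivial ingredient (strict positive definiteness of the Gaussian kernel in infinite dimensions), whereas the paper's proof hides the mechanism inside the cited results. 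A minor remark: your sentence reducing the claim to injectivity of the barycentre map via $D(\sigma)=-2\|\int\phi\,\mathrm{d}\sigma\|^2$ is not actually used afterwards, since you work directly with $D(\sigma)$ through the subordination identity; you may as well omit it.
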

\begin{proof}
Without loss of generality, assume $H$ to be equal to $L^2[0,1]$. By Theorem 5 in \cite{schoenberg}, for any $\beta \in (0,2]$, there exists an embedding $\Phi : H \to L^2[0,1]$ with $\|x-x'\|_2^{\beta/2} = \|\Phi(x) - \Phi(x')\|_2$ for all $x, x' \in H$, which implies that $(H, \|.\|^\beta)$ is of negative type. By Remark 3.19 in \cite{lyons} (which, along with all its auxiliary results, also holds for pseudometric spaces), the space $(H, \|.\|^\beta)$ therefore has strong negative type for all $\beta \in (0,2)$.
\end{proof}

We can use this Lemma to adapt Corollary 5.9 from \cite{li}.

\begin{corollary}
\label{cor:embedding}
Let $(\mathcal{X}, d)$ be a metric space. If there exists an isometric embedding from $\mathcal{X}$ into a separable Hilbert space $H$, then $(\mathcal{X}, d^\beta)$ is of negative type for all $\beta \in (0,2]$ and of strong negative type for all $\beta \in (0,2)$.
\end{corollary}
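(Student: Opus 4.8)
The plan is to build the required Hilbert-space embedding of $(\mathcal{X}, d^\beta)$ by composing the given isometric embedding with the one supplied by Lemma \ref{lem:hilbertnegativ}, and then to transfer strong negative type from the Hilbert space down to $\mathcal{X}$ by pushing measures forward along the embedding. Write $\iota : \mathcal{X} \to H$ for the given isometric embedding, so that $d(x,x') = \|\iota(x) - \iota(x')\|$ and hence $d^\beta(x,x') = \|\iota(x) - \iota(x')\|^\beta$ for all $x, x' \in \mathcal{X}$; note that since $H$ is separable, so is $\iota(\mathcal{X})$, and therefore $\mathcal{X}$ itself.

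For the negative type assertion, I would apply Lemma \ref{lem:hilbertnegativ} to obtain, for each $\beta \in (0,2]$, a map $\Phi : H \to L^2[0,1]$ with $\|u - u'\|^\beta = \|\Phi(u) - \Phi(u')\|_2^2$ for all $u, u' \in H$. Then $\Phi \circ \iota$ is an embedding of $\mathcal{X}$ into $L^2[0,1]$ satisfying $d^\beta(x,x') = \|(\Phi \circ \iota)(x) - (\Phi \circ \iota)(x')\|_2^2$, which is precisely the defining property of negative type for $(\mathcal{X}, d^\beta)$.

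For strong negative type when $\beta \in (0,2)$, it remains to verify that $D(\mu_1 - \mu_2) = 0$ implies $\mu_1 = \mu_2$ for probability measures $\mu_1, \mu_2$ on $\mathcal{X}$ with finite first moments (the converse being trivial). I would push these measures forward to $\tilde\mu_i := \iota_*\mu_i$ on $H$; since $\iota$ preserves distances, the $\tilde\mu_i$ again have finite first moments, and a change of variables yields $D_{d^\beta}(\mu_1 - \mu_2) = \int \|u - u'\|^\beta \,\mathrm{d}(\tilde\mu_1 - \tilde\mu_2)^2 = D_{\|.\|^\beta}(\tilde\mu_1 - \tilde\mu_2)$, where the integrals converge absolutely by the finite moment assumption together with the weak triangle inequality \eqref{eq:weaktriangle}. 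The hypothesis then gives $D_{\|.\|^\beta}(\tilde\mu_1 - \tilde\mu_2) = 0$, and since $(H, \|.\|^\beta)$ has strong negative type for $\beta \in (0,2)$ by Lemma \ref{lem:hilbertnegativ}, we conclude $\tilde\mu_1 = \tilde\mu_2$.

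The one point requiring genuine (if mild) care is the final deduction that $\tilde\mu_1 = \tilde\mu_2$ forces $\mu_1 = \mu_2$, i.e. that $\iota_*$ is injective on Borel measures. This holds because an isometric embedding is a homeomorphism onto its image, so every Borel subset of $\mathcal{X}$ is of the form $\iota^{-1}(B)$ for some Borel $B \subseteq H$; applying the equality $\tilde\mu_1 = \tilde\mu_2$ to such $B$ recovers $\mu_1 = \mu_2$. I expect this measurability bookkeeping, rather than any analytic difficulty, to be the only real subtlety in the argument.
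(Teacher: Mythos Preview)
Your argument is correct and follows the same route as the paper: compose the isometric embedding with the map furnished by Lemma~\ref{lem:hilbertnegativ} to obtain negative type, then push measures forward along the embedding, use the identity $D_{d^\beta}(\mu_1-\mu_2)=D_{\|.\|^\beta}(\iota_*\mu_1-\iota_*\mu_2)$, invoke strong negative type of $(H,\|.\|^\beta)$, and recover $\mu_1=\mu_2$ from injectivity of $\iota$. Your treatment is slightly more careful about the measurability step and the moment condition, but otherwise the proofs coincide.
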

\begin{proof}
Fix $\beta \in (0,2]$, and let $\varphi : \mathcal{X} \to L^2[0,1]$ be an isometric embedding. By Lemma \ref{lem:hilbertnegativ}, $(H, \|.\|_H^\beta)$ is of negative type via some embedding $\Phi$, which implies that $(\mathcal{X}, d^\beta)$ is of negative type via $(\Phi \circ \varphi)$. If $\beta < 2$, then $(H, \|.\|_H^\beta)$ is of strong negative type, and so, for any two probability measures $\mu_1, \mu_2$ on $\mathcal{X}$, we have that
\begin{align*}
D(\mu_1 - \mu_2) &= \int \|\varphi(x) - \varphi(x')\|_H^\beta ~\mathrm{d}(\mu_1^2 - \mu_2^2)(x,x') \\
&=\int_{\varphi(\mathcal{X})^2} \|x-x'\|_H^\beta ~\mathrm{d}\left((\mu_1^\varphi)^2 - (\mu_2^\varphi)^2\right)(x,x') =D(\mu_1^\varphi - \mu_2^\varphi),
\end{align*}
where $\mu_i^\varphi$ denotes the pushforward of $\mu_i$ via $\varphi$. We can extend the last integral to the entire space $H$, because the pushforward measures vanish on $\varphi(\mathcal{X})^C$. Using the strong negative type of $(H, \|.\|_H^\beta)$, this gives us $\mu_1^\varphi = \mu_2^\varphi$, which implies $\mu_1 = \mu_2$, since $\varphi$ is injective.
\end{proof}

\begin{corollary}
Let $\beta \in (1,2)$. Then, if we replace the finite first moment condition of Theorem \ref{thm:fs} by a finite $\beta$-moment assumption, Theorem \ref{thm:fs} still holds for $\mathrm{dcov}_\beta$. If we furthermore assume $\mathcal{X}$ and $\mathcal{Y}$ to be isometrically embeddable into separable Hilbert spaces, and replace the finite $(1+\varepsilon)$-condition with a finite $(1+\varepsilon)\beta$-moment assumption, then Theorem \ref{thm:asymptotik} still holds for $\mathrm{dcov}_\beta$.
\end{corollary}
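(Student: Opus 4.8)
The plan is to re-run the proofs of Theorems \ref{thm:fs} and \ref{thm:asymptotik} with the metric $d$ replaced throughout by the pseudometric $d^\beta$, checking that every step survives the loss of the genuine triangle inequality. The guiding principle is that every estimate in those proofs that invokes the triangle inequality can instead be carried out with the weak triangle inequality \eqref{eq:weaktriangle}, at the cost of replacing the constant $1$ by $2$; this is exactly what Lemma \ref{lem:pseudosquint} already accomplishes, and the required moment orders are inflated by the factor $\beta$ because each occurrence of $d$ becomes $d^\beta$.

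For the first assertion I would repeat the proof of Theorem \ref{thm:fs} with $f$ now built from $d^\beta$, as in Lemma \ref{lem:pseudosquint}. The bound \eqref{eq:habsch} is replaced by $|h(z_1, \dots, z_6)| \leq 16\, d^\beta(x_2, x_3)\, d^\beta(y_1, y_6)$, and combining \eqref{eq:weaktriangle} with the elementary inequality $a + b \leq ab$ for $a, b \geq 2$ yields $d^\beta(x, x') \leq 2\,(2 \lor d^\beta(x, x_0))(2 \lor d^\beta(x', x_0))$. Defining the dominating functions $\varphi_i$ as the corresponding maxima $2 \lor d^\beta(\cdot, x_0)$ (respectively $2 \lor d^\beta(\cdot, y_0)$), these are continuous and, under the finite $\beta$-moment assumption, $\theta$-integrable. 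Since Lemma \ref{lem:fskonvergenz} only requires a separable metrizable space and is entirely insensitive to whether $d^\beta$ is a genuine metric, it then delivers the almost sure convergence exactly as before.

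For the second assertion the decisive point is that the identity \eqref{eq:deltatheta}, and hence the positive semidefiniteness of $\delta_\theta$, require $(\mathcal{X}, d^\beta)$ and $(\mathcal{Y}, d^\beta)$ to be of negative type. This is precisely where the new hypothesis enters: by Corollary \ref{cor:embedding}, the isometric embeddability of $\mathcal{X}$ and $\mathcal{Y}$ into separable Hilbert spaces guarantees that their $\beta$-powers are of (strong) negative type for $\beta \in (1,2)$. Because $d^\beta$ induces the same topology as $d$, the space $\mathcal{X} \times \mathcal{Y}$ remains $\sigma$-compact and metrizable, so the structural parts of the proof of Theorem \ref{thm:asymptotik}—the Hoeffding decomposition, the degeneracy of $\bar h$ under independence, and the reduction $\bar h_2 = \delta_\theta/15$—carry over verbatim, none of them using the triangle inequality.

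It then remains to supply the moment and mixing inputs. Lemma \ref{lem:pseudosquint} with $p = 1 + \varepsilon/2$ shows that the finite $(1+\varepsilon)\beta$-moment assumption (which dominates the needed $\beta(1+\varepsilon/2)$-moment) gives $\delta_\theta$ finite $(2+\varepsilon)$-moments with respect to $\theta^2$ and finite $(1+\varepsilon/2)$-moments on the diagonal, so Theorem \ref{thm:rangzwei} applies to $\bar h_2$ via $2\alpha(n) \leq \beta(n)$ and yields \eqref{eq:h2konv}. For \eqref{eq:probkonvnull}, the $\beta$-analogue of Lemma \ref{lem:indizesbeschr}—in which a finite $\beta q$-moment of $\mu$ replaces the finite $q$-moment—together with the independence of $X$ and $Y$ bounds $\mathbb{E}[\bar h(Z_{i_1}, \dots, Z_{i_6})^{2+\varepsilon}]$ uniformly in the indices, whereupon Lemma \ref{lem:varianzdeg} with $c$ up to $6$ and $p = 2 + \varepsilon$ gives $\mathbb{E}[V_{\bar h_c}^2] = O(n^{-c})$ exactly under $\beta(n) = O(n^{-r})$ with $r > 6(1 + 2\varepsilon^{-1})$. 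Assembling these estimates as in \eqref{eq:2normdiff} and applying Slutsky's theorem concludes the argument. I expect the only genuine obstacle—beyond careful bookkeeping of the inflated moment orders—to be the negative-type requirement, which is why the embeddability hypothesis is indispensable here, in contrast to the range $\beta \in (0,1]$ where $d^\beta$ is still a metric.
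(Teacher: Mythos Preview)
Your proposal is correct and follows essentially the same approach as the paper's own proof: replace the bound \eqref{eq:habsch} by the pseudometric version from Lemma \ref{lem:pseudosquint} (with the constant $16$ in place of $4$) to recover Theorem \ref{thm:fs}, and invoke Corollary \ref{cor:embedding} together with Lemma \ref{lem:pseudosquint} to verify that the proof of Theorem \ref{thm:asymptotik} goes through for $(\mathcal{X},d^\beta)$ and $(\mathcal{Y},d^\beta)$. The paper's proof is considerably terser than yours, but the logical skeleton---weak triangle inequality for the domination step, isometric embeddability $\Rightarrow$ negative type via Corollary \ref{cor:embedding}, and Lemma \ref{lem:pseudosquint} for the moment conditions---is identical.
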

\begin{proof}
We first consider Theorem \ref{thm:fs}. We can replace \eqref{eq:habsch} by
$$
|h(z_1, ..., z_6)| \leq 16 d^\beta(x_2, x_3)d^\beta(y_1, y_4)
$$
as we have done in the proof of Lemma \ref{lem:pseudosquint}. This changes the original bound only by constant, which does not affect the remainder of the proof.

If $\mathcal{X}$ and $\mathcal{Y}$ are isometrically embeddable into separable Hilbert spaces, then by Corollary \ref{cor:embedding} the spaces resulting from raising their metrics to the power $\beta$ are of negative type. By Lemma \ref{lem:pseudosquint}, the proof of Theorem \ref{thm:asymptotik} still holds for $\beta$-pseudometric spaces. We can therefore apply Theorem \ref{thm:asymptotik} to the spaces $(\mathcal{X}, d^\beta)$ and $(\mathcal{Y}, d^\beta)$.
\end{proof}

\section{Further work}
The limiting distribution established in Theorem \ref{thm:asymptotik} is dependent both on the marginal distribution $\theta$ (through the eigenvaleus $\lambda_k$) and the dependence structure of the process $(Z_k)_{k \in \mathbb{N}}$ (through the Gaussian process $(\zeta_k)_{k \in \mathbb{N}}$). Thus, one cannot directly use this result to construct a test of independence, since the critical values of this test would in general be unknown.

Such a dependence of the limiting distribution on unknown parameters is not unusual -- indeed, in the iid case, there are many well-established ways to approximate the asymptotic distribution of a random variable, even if it may depend on unknown parameters. The authors of \cite{srb}, for instance, propose a permutation test to approximate the asymptotic distribution of the distance covariance for real-valued iid data.

In the case of dependent data, such as we have examined in this paper, one cannot employ methods that would alter the dependence structure of the original sequence $(Z_k)_{k \in \mathbb{N}}$, since this in turn would result in a different Gaussian process $(\zeta_k)_{k \in \mathbb{N}}$ and thus a different limiting distribution. A feasible approach might be a type of block bootstrap (cf. \cite{lahiri}, sections 2.5 -- 2.7), where the resampling occurs from a collection of blocks, each consisting of a certain number of consecutive observations, thus leaving the dependence structure of the original process unchanged. We are currently working on proving the consistency of such a block bootstrap for the distance covariance.

\section*{Acknowledgements}
The author was supported by the German Research Council (DFG) via Research Training Group RTG 2131 (\textit{High dimensional phenomena in probability -- fluctuations and discontinuity}).

\medskip
\bibliographystyle{acm}
\bibliography{dcbib}
\end{document}